\documentclass[11pt,twoside]{amsart}
\usepackage{amsmath, amsthm, amsfonts, amssymb}
\usepackage[numbers, sort & compress]{natbib}
\usepackage[mathcal,mathscr]{eucal}
\usepackage{mathrsfs}
\usepackage{graphicx, xcolor}
\usepackage[colorlinks]{hyperref}

\parskip          =2mm
\oddsidemargin    =0cm \evensidemargin   =0cm \textwidth =16cm
\textheight       =24cm  \headheight       =0pt \topmargin = 0cm
\footskip         =0cm \topskip          =0cm
\parskip           =0.2cm
\binoppenalty10000

\newtheorem{theorem}{Theorem}[section]
\newtheorem{lemma}[theorem]{Lemma}

\newtheorem{corollary}[theorem]{Corollary}
\theoremstyle{definition}
\newtheorem{definition}[theorem]{Definition}

\newtheorem{problem}[theorem]{Problem}

\begin{document}

\title[\scriptsize{Generalized Cartesian Symmetry Classes}]{Generalized Cartesian Symmetry Classes}

\author[S. S. Gholami and Y. Zamani]{Seyyed Sadegh Gholami and Yousef Zamani$^{\ast }$}

\address{Department of Mathematics, Faculty of Basic Sciences, Sahand University of Technology, Tabriz, Iran}
\email{rgolamie@yahoo.com}

\address{Department of Mathematics, Faculty of Basic Sciences, Sahand University of Technology, Tabriz, Iran}
\email{zamani@sut.ac.ir}

\subjclass[2000]{15A69, 20C15}
\thanks{$^{\ast}$ Corresponding author}

\keywords{Irreducible unitary representation, Cartesian symmetry class, generalized trace functions, orthogonal basis, o.b.-representation}
\maketitle

\begin{abstract}
Let $V$ be a unitary space. Suppose $G$ is a subgroup of the full symmetric group $S_{m}$ and $\mathfrak{X}$ is an irreducible unitary representation of $G$. In this paper, we introduce the generalized Cartesian symmetry class over $V$ associated with $G$ and $\mathfrak{X}$. Then we investigate some important properties of this vector space. Also, we study some basic properties of the induced linear operators on the generalized Cartesian symmetry classes. Some open problems are also given.
\end{abstract}
\hspace{-1cm}\rule{\textwidth}{0.2mm}
\section{\bf Introduction and Preliminaries}
In recent years, the study of symmetry classes has played a fundamental role in various branches of mathematics (see \cite{Babaei 1,Babaei 2,Gholami,Holmes 1,Holmes 2,Merris,Zamani,Zamani 1}). In this paper, we focus on the generalized Cartesian symmetry class associated with an irreducible unitary representation of a subgroup of the full symmetric group. Our main goal is to establish important properties of this vector space.\\

Let $S_m$ denote the full symmetric group of degree $m$, and let $G$ be a subgroup of $S_m$. Let $U$ be a unitary space, meaning a finite dimensional complex vector space equipped with an inner product. The set of all linear operators on $U$ is denoted by $End~(U)$. Assume that $\mathfrak{X}$ is an irreducible unitary representation of $G$ over $U$.
The generalized trace function
 $Tr_\mathfrak{X}:\mathbb{C}_{m\times m}\longrightarrow \text{End}~(U)$ is defined by
  $$
  Tr_\mathfrak{X} (A)=\sum_{\sigma\in G}\mathfrak{X}(\sigma)\sum_{i=1}^m a_{i\sigma(i)}
  $$
  for  $A=(a_{ij})\in \mathbb{C}_{m\times m}$.\\
It is proved that $Tr_\mathfrak{X}(A^{\ast})=Tr_\mathfrak{X}(A)^{\ast}$.  In particular,
if $A$ is Hermitian, then $Tr_\mathfrak{X}(A)$ is Hermitian (see \cite{Lei2}).\\

Let $V$ be a unitary space of dimension $n$ and denote by   $\times^{m}V$ be the Cartesian product of $m$-copies of $V$. Then $U\otimes V^{\times m}$ is a unitary space with an induced inner product given by
$$
 \langle u\otimes x^\times ,  v\otimes y^\times\rangle=\langle u,v \rangle \sum_{i=1}^m  \langle x_i,y_i \rangle,
$$
where $u, v\in U$ and $x^\times=(x_1,\cdots, x_m)$, $y^\times=(y_1,\cdots, y_m)\in \times^{m}V$.\\

The {\em generalized Cartesian symmetrizer associated with $G$ and $\mathfrak{X}$} is defined by
$$
C_\mathfrak{X}=\frac{1}{|G|}\sum_{\sigma\in G}\mathfrak{X}(\sigma)\otimes Q(\sigma),
 $$
 where
\begin{equation*}
 Q(\sigma)(v_1,\cdots , v_m )=(v_{\sigma^{-1}(1)},\cdots, v_{\sigma^{-1}(m)})
 \end{equation*}
 is Cartesian permutation operator with respect to $\sigma\in G$.\\

In the following theorem we show that $C_{\mathfrak{X}}$ is an orthogonal projection on $U\otimes \times^{m}V$.

\begin{theorem}\label{projection}
Suppose $\mathfrak{X}$  is an irreducible unitary representation of $G$ over unitary space $U$. Then $C_\mathfrak{X}$  is an orthogonal projection on $U\otimes \times^{m}V$.
\end{theorem}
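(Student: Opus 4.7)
The plan is to verify the two defining properties of an orthogonal projection separately, namely idempotency $C_{\mathfrak{X}}^2=C_{\mathfrak{X}}$ and self-adjointness $C_{\mathfrak{X}}^\ast=C_{\mathfrak{X}}$. Both will follow from elementary group-theoretic substitutions once the correct behavior of the two tensor factors under composition and adjunction is in place. Note that irreducibility of $\mathfrak{X}$ plays no role here; only the facts that $\mathfrak{X}$ is a unitary representation and that $Q:G\to\mathrm{End}(\times^m V)$ is a unitary representation on the Cartesian product will be used.

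As preparatory observations, I would first record the two multiplicativity identities $\mathfrak{X}(\sigma)\mathfrak{X}(\tau)=\mathfrak{X}(\sigma\tau)$ (by definition of representation) and $Q(\sigma)Q(\tau)=Q(\sigma\tau)$ on $\times^m V$. The latter is a short direct check: applying $Q(\tau)$ first sends $(v_1,\dots,v_m)$ to $(v_{\tau^{-1}(1)},\dots,v_{\tau^{-1}(m)})$, and a subsequent application of $Q(\sigma)$ relabels index $i$ by $\sigma^{-1}(i)$, yielding the entry $v_{\tau^{-1}\sigma^{-1}(i)}=v_{(\sigma\tau)^{-1}(i)}$ in position $i$, as required. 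Combined with the mixed-product formula for the tensor, this gives $(\mathfrak{X}(\sigma)\otimes Q(\sigma))(\mathfrak{X}(\tau)\otimes Q(\tau))=\mathfrak{X}(\sigma\tau)\otimes Q(\sigma\tau)$.

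For idempotency, expanding $C_{\mathfrak{X}}^2$ and applying the identity just established yields
\[
C_{\mathfrak{X}}^2=\frac{1}{|G|^2}\sum_{\sigma,\tau\in G}\mathfrak{X}(\sigma\tau)\otimes Q(\sigma\tau).
\]
Substituting $\rho=\sigma\tau$ for fixed $\sigma$ shows that the inner sum equals $|G|\,C_{\mathfrak{X}}$ for each of the $|G|$ choices of $\sigma$, and the normalization $1/|G|^2$ produces $C_{\mathfrak{X}}$. For self-adjointness I would first verify that $Q(\sigma)^\ast=Q(\sigma^{-1})$ using the given inner product on $\times^m V$: a reindexing $j=\sigma^{-1}(i)$ in $\sum_i\langle v_{\sigma^{-1}(i)},w_i\rangle$ identifies it with $\sum_j\langle v_j,w_{\sigma(j)}\rangle$, which is exactly $\langle x^\times, Q(\sigma^{-1}) y^\times\rangle$. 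Combined with $\mathfrak{X}(\sigma)^\ast=\mathfrak{X}(\sigma^{-1})$ (from unitarity of $\mathfrak{X}$), taking adjoints term by term in $C_{\mathfrak{X}}$ and substituting $\rho=\sigma^{-1}$ produces $C_{\mathfrak{X}}$ back.

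I do not anticipate a real obstacle: the whole argument reduces to the compatibility of the Cartesian permutation operators with composition and adjunction, paralleling the classical proof that the symmetrizer attached to a tensor symmetry class is a projection. The only mild subtlety is the inverse conventions in the definition of $Q(\sigma)$, which must be tracked carefully so that $Q$ is an honest homomorphism rather than an anti-homomorphism; once this is confirmed the rest is purely formal.
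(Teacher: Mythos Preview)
Your proposal is correct and follows essentially the same route as the paper's proof: verify self-adjointness using $\mathfrak{X}(\sigma)^\ast=\mathfrak{X}(\sigma^{-1})$ and $Q(\sigma)^\ast=Q(\sigma^{-1})$, then verify idempotency via the substitution $\sigma\tau\mapsto\rho$ after using multiplicativity of $\mathfrak{X}$ and $Q$. You are in fact slightly more explicit than the paper in justifying that $Q$ is a genuine homomorphism and that $Q(\sigma)$ is unitary, and your observation that irreducibility of $\mathfrak{X}$ is unnecessary is correct.
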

\begin{proof}
We first prove that $C_\mathfrak{X}$ is Hermitian. We have
\begin{align*}
(C_\mathfrak{X})^*=&\left(\frac{1}{|G|}\sum_{\sigma\in G}\mathfrak{X}(\sigma)\otimes Q(\sigma)\right)^*\\=&\frac{1}{|G|}\sum_{\sigma\in G}\mathfrak{X}(\sigma)^*\otimes Q(\sigma)^*\\
=&\frac{1}{|G|}\sum_{\sigma\in G}\mathfrak{X}(\sigma^{-1})\otimes Q(\sigma^{-1})\\
=&C_\mathfrak{X}.
\end{align*}
Now we show that $C_{\mathfrak{X}}$ is idempotent. We have
\begin{align*}
(C_\mathfrak{X})^2=&\left(\frac{1}{|G|}\sum_{\sigma\in G}\mathfrak{X}(\sigma)\otimes Q(\sigma)\right)\left(\frac{1}{|G|}\sum_{\pi\in G}\mathfrak{X}(\sigma)\otimes Q(\pi)\right)\\
=&\frac{1}{|G|^2}\sum_{\sigma\in G}\sum_{\pi\in G}\mathfrak{X}(\sigma)\mathfrak{X}(\pi)\otimes Q(\sigma) Q(\pi)\\
=&\frac{1}{|G|^2}\sum_{\sigma\in G}\sum_{\pi\in G}\mathfrak{X}(\sigma\pi)\otimes Q(\sigma\pi)\hspace{10mm}(\sigma\pi=\tau)\\
=&\frac{1}{|G|^2}\sum_{\sigma\in G}\sum_{\tau\in G}\mathfrak{X}(\tau)\otimes Q(\tau)\\
=&\frac{1}{|G|}\sum_{\sigma\in G}C_\mathfrak{X}\\
=& C_\mathfrak{X}.
\end{align*}
\end{proof}
\begin{definition}
The range of $C_{\mathfrak{X}}$,\\
 $$
V^{\mathfrak{X}}(G):= C_\mathfrak{X}(U\otimes\times^{m}V),
 $$
is called the {\em generalized Cartesian symmetry class} over $V$ associated with $G$ and $\mathfrak{X}$.
\end{definition}
If $\dim U = 1$, then $V^\mathfrak{X}(G)$ reduces to $V^\chi(G)$, which is the Cartesian symmetry class associated with $G$ and the irreducible character $\chi$ of $G$ corresponding to the representation $\mathfrak{X}$ (see \cite{Gholami,Lei, Zamani}).
The elements of $V^\mathfrak{X}(G)$ that have the form
$C_\mathfrak{X}(u\otimes x^\times) $
are called the generalized Cartesian symmetrized vectors. The equality of two
generalized symmetrized vectors has been studied in \cite{Lei2}.
We will need the following theorem (see \cite[Corollary 5.9]{Lei2}).
\begin{theorem}\label{Corollary 5.9}
Let $\mathfrak{X}$ be a unitary representation of $G$ over unitary space $U$ and
$x^{\times}, y^{\times}\in \times^{m}V$. Let $A=[a_{ij}], B=[b_{ij}]\in \mathbb{C}_{m\times n}$ such that $x_{i}=\sum_{j=1}^{n}a_{ij}e_{j}, y_{i}=\sum_{j=1}^{n}b_{ij}, i=1,\cdots,m$. Then the following are equivalent:
\begin{itemize}
\item[(a)]
$C_{\mathfrak{X}}(u\otimes x^{\times})=C_{\mathfrak{X}}(u\otimes y^{\times})$ for all $u\in U$.
\item[(b)]
$Tr_{\mathfrak{X}}(AA^{\ast})=Tr_{\mathfrak{X}}(AB^{\ast})=Tr_{\mathfrak{X}}(BB^{\ast}).$
\end{itemize}
\end{theorem}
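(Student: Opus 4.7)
My plan is to reduce both conditions to equalities of the scalar quadratic forms $u\mapsto\langle T u, u\rangle$ on the complex space $U$, exploit that $C_\mathfrak{X}$ is an orthogonal projection (Theorem~\ref{projection}), and then invoke polarization on $U$ to pass from quadratic-form equalities to equalities of operators on $U$.

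The one computational identity I would establish first is
$$
\langle C_\mathfrak{X}(u\otimes x^\times),\, u\otimes y^\times\rangle \;=\; \frac{1}{|G|}\langle Tr_\mathfrak{X}(AB^*)u,\,u\rangle\qquad(u\in U).
$$
To derive it, I unfold the definitions of $C_\mathfrak{X}$ and of the inner product on $U\otimes\times^m V$; the left side becomes $\frac{1}{|G|}\sum_{\sigma}\langle\mathfrak{X}(\sigma)u,u\rangle \sum_{i=1}^m \langle x_{\sigma^{-1}(i)}, y_i\rangle$. In the given coordinates $\langle x_k,y_l\rangle=(AB^*)_{kl}$, and the change of index $k=\sigma^{-1}(i)$ converts the inner sum into $\sum_k(AB^*)_{k,\sigma(k)}$, matching the defining sum of $Tr_\mathfrak{X}(AB^*)$ exactly. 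Specializing with $y^\times=x^\times$ or $x^\times=y^\times$ produces the analogous formulas involving $Tr_\mathfrak{X}(AA^*)$ and $Tr_\mathfrak{X}(BB^*)$, and by the projection property $\|C_\mathfrak{X}(u\otimes x^\times)\|^2=\langle C_\mathfrak{X}(u\otimes x^\times),u\otimes x^\times\rangle$ equals $\frac{1}{|G|}\langle Tr_\mathfrak{X}(AA^*)u,u\rangle$.

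For (a) $\Rightarrow$ (b): since $C_\mathfrak{X}=C_\mathfrak{X}^2=C_\mathfrak{X}^*$, the equality of the symmetrized vectors forces $\|C_\mathfrak{X}(u\otimes x^\times)\|^2=\|C_\mathfrak{X}(u\otimes y^\times)\|^2=\langle C_\mathfrak{X}(u\otimes x^\times), u\otimes y^\times\rangle$ for every $u$, which by the key identity says the three scalar quadratic forms of $Tr_\mathfrak{X}(AA^*),\,Tr_\mathfrak{X}(BB^*),\,Tr_\mathfrak{X}(AB^*)$ coincide on $U$; complex polarization then promotes this to the operator equalities in (b). For (b) $\Rightarrow$ (a): I compute $\|C_\mathfrak{X}(u\otimes(x^\times-y^\times))\|^2$ directly via the projection property and the key identity, obtaining $\frac{1}{|G|}\langle Tr_\mathfrak{X}((A-B)(A-B)^*)u,u\rangle$. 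Linearity of $Tr_\mathfrak{X}$ applied to $(A-B)(A-B)^*=AA^*-AB^*-BA^*+BB^*$ splits this into four terms; by (b), $Tr_\mathfrak{X}(AB^*)$ equals the Hermitian operator $Tr_\mathfrak{X}(AA^*)$, whence $Tr_\mathfrak{X}(BA^*)=Tr_\mathfrak{X}(AB^*)^*=Tr_\mathfrak{X}(AB^*)$, and the four terms telescope to zero. Thus the norm vanishes for every $u$, and linearity of $C_\mathfrak{X}$ yields (a).

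The main obstacle I foresee is the bookkeeping in the key identity: tracking the inverse inside $Q(\sigma)$, placing the conjugation correctly in $AB^*$ rather than $A^*B$, and carrying out the substitution $k=\sigma^{-1}(i)$ so that the indexing matches the $i\mapsto\sigma(i)$ convention in the definition of $Tr_\mathfrak{X}$. Once that identity is in hand, both implications reduce to formal manipulations with linearity, Hermiticity of $Tr_\mathfrak{X}$ on Hermitian matrices, and polarization on the complex inner-product space $U$.
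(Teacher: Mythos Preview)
The paper does not supply its own proof of this theorem; it merely cites \cite[Corollary~5.9]{Lei2} and uses the result as a black box. Hence there is no in-paper argument to compare your proposal against.

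On its own merits your proof is correct. The key identity you isolate is precisely Theorem~\ref{inner} specialized to the situation where the Gram matrix $[\langle x_i,y_j\rangle]$ equals $AB^{\ast}$ (since $\langle x_k,y_l\rangle=\sum_j a_{kj}\overline{b_{lj}}=(AB^{\ast})_{kl}$), so the bookkeeping you worry about is already handled by that theorem. The implication (a)$\Rightarrow$(b) is sound because on a \emph{complex} inner-product space the quadratic form $u\mapsto\langle Tu,u\rangle$ determines $T$; be sure to say explicitly that this uses $\mathbb{C}$ (it would fail over $\mathbb{R}$). For (b)$\Rightarrow$(a), your telescoping works: from $Tr_{\mathfrak{X}}(AB^{\ast})=Tr_{\mathfrak{X}}(AA^{\ast})$ and the Hermiticity of $Tr_{\mathfrak{X}}(AA^{\ast})$ (since $AA^{\ast}$ is Hermitian and $Tr_{\mathfrak{X}}(M^{\ast})=Tr_{\mathfrak{X}}(M)^{\ast}$) you get $Tr_{\mathfrak{X}}(BA^{\ast})=Tr_{\mathfrak{X}}(AB^{\ast})^{\ast}=Tr_{\mathfrak{X}}(AA^{\ast})$, and the four terms in $Tr_{\mathfrak{X}}((A-B)(A-B)^{\ast})$ cancel. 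One small point: you tacitly use that $u\otimes(x^{\times}-y^{\times})=u\otimes x^{\times}-u\otimes y^{\times}$, which holds because $\times^{m}V$ is the direct sum $V^{m}$ and the tensor is bilinear; it costs one line to state.
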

The following theorem states the inner product two generalized symmetrized vectors in terms of the generalized trace function.
\begin{theorem}\label{inner}
For all $u,v\in U$ and $x^\times,y^\times\in \times^{m}V$ we have
$$
\left\langle C_\mathfrak{X}(u\otimes x^\times),v\otimes y^\times \right\rangle=\dfrac{1}{|G|}\langle Tr_\mathfrak{X}(A)u,v\rangle,
$$
where $A=[a_{ij}]\in \mathbb{C}_{m\times m}$ and $a_{ij}=\langle x_i,y_j\rangle$.
\end{theorem}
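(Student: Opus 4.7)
The plan is to compute the left-hand side directly from the definitions of $C_{\mathfrak{X}}$ and of the inner product on $U\otimes\times^m V$, and then reindex one sum so that the result matches the definition of $Tr_{\mathfrak{X}}(A)$.

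First I would expand
$$
C_{\mathfrak{X}}(u\otimes x^\times)=\frac{1}{|G|}\sum_{\sigma\in G}\mathfrak{X}(\sigma)u\otimes Q(\sigma)x^\times,
$$
and apply the given formula
$\langle u'\otimes x^\times,v\otimes y^\times\rangle=\langle u',v\rangle\sum_{i=1}^m\langle x_i,y_i\rangle$ term by term, so that the left-hand side becomes
$$
\frac{1}{|G|}\sum_{\sigma\in G}\langle\mathfrak{X}(\sigma)u,v\rangle\sum_{i=1}^m\langle x_{\sigma^{-1}(i)},y_i\rangle.
$$

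Next I would rewrite the inner sum in terms of the matrix $A=[a_{ij}]$ with $a_{ij}=\langle x_i,y_j\rangle$. Substituting $j=\sigma^{-1}(i)$ (equivalently $i=\sigma(j)$) gives
$$
\sum_{i=1}^m\langle x_{\sigma^{-1}(i)},y_i\rangle=\sum_{j=1}^m\langle x_j,y_{\sigma(j)}\rangle=\sum_{j=1}^m a_{j\sigma(j)},
$$
which is exactly the scalar appearing in the $\sigma$-term of the definition of $Tr_{\mathfrak{X}}(A)$.

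Finally I would pull the scalars $\sum_j a_{j\sigma(j)}$ inside the inner product (using linearity in the first slot) and combine the sum over $\sigma$ with $\mathfrak{X}(\sigma)$ to recognize
$$
\sum_{\sigma\in G}\Bigl(\sum_{j=1}^m a_{j\sigma(j)}\Bigr)\langle\mathfrak{X}(\sigma)u,v\rangle=\Bigl\langle\Bigl(\sum_{\sigma\in G}\mathfrak{X}(\sigma)\sum_{j=1}^m a_{j\sigma(j)}\Bigr)u,v\Bigr\rangle=\langle Tr_{\mathfrak{X}}(A)u,v\rangle,
$$
which delivers the identity after dividing by $|G|$. The only slightly delicate point is the index substitution, so I would spell that step out, but no further obstacle is expected since everything else is just bilinearity of the tensor inner product and the definition of the generalized trace.
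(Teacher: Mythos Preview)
Your proposal is correct and follows essentially the same approach as the paper's proof: expand $C_{\mathfrak{X}}$, apply the tensor inner product termwise, reindex $i\mapsto \sigma^{-1}(i)$ to turn $\sum_i\langle x_{\sigma^{-1}(i)},y_i\rangle$ into $\sum_j a_{j\sigma(j)}$, and then use linearity to recognize $Tr_{\mathfrak{X}}(A)$. The only cosmetic difference is that the paper pulls the scalar inside the inner product before reindexing rather than after.
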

\begin{proof}
From definition $C_{\mathfrak{X}}$,
\begin{align*}
\left\langle C_{\mathfrak{X}}(u\otimes x^\times),v\otimes y^\times \right\rangle=&\langle\dfrac{1}{|G|}\sum_{\sigma\in G}\mathfrak{X}(\sigma) u\otimes Q(\sigma)x^\times ,v\otimes y^\times\rangle\\
=&\dfrac{1}{|G|}\sum_{\sigma\in G}\langle \mathfrak{X}(\sigma) u,v\rangle\sum_{i=1}^m \langle x_{\sigma^{-1}(i)},y_{i}\rangle\\
=&\dfrac{1}{|G|}\sum_{\sigma\in G}\langle\sum_{i=1}^m\langle x_{\sigma^{-1}(i)},y_i\rangle \mathfrak{X}(\sigma)u,v\rangle\\
=&\dfrac{1}{|G|}\langle\sum_{\sigma\in G}\mathfrak{X}(\sigma)\sum_{i=1}^m \langle x_i,y_{\sigma(i)}\rangle u,v\rangle\\
=&\dfrac{1}{|G|}\langle Tr_\mathfrak{X}(A)u,v\rangle,
\end{align*}
and the result holds.
\end{proof}

In this paper, we will refer to the following lemma frequently.
\begin{lemma}\label{1}
Let $\sigma\in G$, $u\in U$ and $x^\times \in \times^{m}V$. Then
  $$
  C_\mathfrak{X}(u\otimes x^\times_{\sigma})=C_\mathfrak{X}(\mathfrak{X}(\sigma)u \otimes x^\times).
  $$
\end{lemma}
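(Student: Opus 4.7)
The plan is to prove the identity by direct computation, unwinding the definition of $C_\mathfrak{X}$ and exploiting the fact that both $\mathfrak{X}$ and $Q$ are homomorphisms, so that a single re-indexing of the summation over $G$ converts one side to the other.

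First I would read the notation $x^{\times}_{\sigma}$ as $(x_{\sigma(1)},\ldots,x_{\sigma(m)})$, which equals $Q(\sigma^{-1})\,x^{\times}$ in view of the definition of $Q$ given earlier. With this identification the left hand side becomes
\[
C_\mathfrak{X}\bigl(u\otimes Q(\sigma^{-1})x^{\times}\bigr)
=\frac{1}{|G|}\sum_{\tau\in G}\mathfrak{X}(\tau)u\;\otimes\;Q(\tau)Q(\sigma^{-1})x^{\times}.
\]
Next I would use the homomorphism property $Q(\tau)Q(\sigma^{-1})=Q(\tau\sigma^{-1})$ (a fact that was already implicitly used in the proof of Theorem~\ref{projection}) to combine the two Cartesian permutation operators.

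The key step is the change of variables $\pi=\tau\sigma^{-1}$, equivalently $\tau=\pi\sigma$. Since $G$ is a group this is a bijection of $G$ onto itself, so the sum rewrites as
\[
\frac{1}{|G|}\sum_{\pi\in G}\mathfrak{X}(\pi\sigma)u\;\otimes\;Q(\pi)x^{\times}
=\frac{1}{|G|}\sum_{\pi\in G}\mathfrak{X}(\pi)\bigl(\mathfrak{X}(\sigma)u\bigr)\;\otimes\;Q(\pi)x^{\times},
\]
using that $\mathfrak{X}$ is a homomorphism. The right hand side is, by definition, $C_\mathfrak{X}\bigl(\mathfrak{X}(\sigma)u\otimes x^{\times}\bigr)$, which gives the claim.

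There is no real obstacle here; the only thing to be careful about is the direction of the permutation in the notation $x^{\times}_{\sigma}$, since writing $x^{\times}_{\sigma}=Q(\sigma)x^{\times}$ instead would produce $\mathfrak{X}(\sigma^{-1})u$ on the right side rather than $\mathfrak{X}(\sigma)u$. Once the convention is fixed, the proof is a two line manipulation and does not use the irreducibility or unitarity of $\mathfrak{X}$, only the fact that $\mathfrak{X}$ and $Q$ are representations of $G$.
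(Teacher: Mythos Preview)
Your argument is correct and is essentially identical to the paper's own proof: expand $C_\mathfrak{X}$, use $x^{\times}_{\sigma}=Q(\sigma^{-1})x^{\times}$ and $Q(\tau)Q(\sigma^{-1})=Q(\tau\sigma^{-1})$, substitute $\pi=\tau\sigma^{-1}$, and finish with the homomorphism property of $\mathfrak{X}$. Your added remarks about the convention for $x^{\times}_{\sigma}$ and the non-use of irreducibility/unitarity are accurate and do not diverge from the paper's reasoning.
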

\begin{proof}
From definition $C_\mathfrak{X}$, We have
\begin{align*}
  C_\mathfrak{X}(u\otimes x^\times_{\sigma})=& \frac{1}{|G|}\sum_{\tau\in G}\left( \mathfrak{X}(\tau)\otimes Q(\tau)\right)(u\otimes x^\times_{\sigma})\\
=& \frac{1}{|G|}\sum_{\tau\in G}^{}\mathfrak{X}(\tau) u\otimes Q(\tau) x^\times_{\sigma}\\
=&\frac{1}{|G|}\sum_{\tau\in G}^{}\mathfrak{X}(\tau) u\otimes Q(\tau) Q(\sigma^{-1})x^\times
\\
=& \frac{1}{|G|}\sum_{\tau\in G}^{}\mathfrak{X}(\tau) u\otimes Q(\tau\sigma^{-1})x^\times \hspace{10mm} (\tau\sigma^{-1}=\pi )\\
=&  \frac{1}{|G|}\sum_{\pi\in G}^{}\mathfrak{X}(\pi \sigma) u\otimes Q(\pi)x^\times   \hspace{15mm} \\
=&   \frac{1}{|G|}\sum_{\pi\in G}^{}\mathfrak{X}(\pi)\mathfrak{X}(\sigma) u\otimes Q(\pi)x^\times\\
=& \left( \frac{1}{|G|}\sum_{\pi\in G}^{}\mathfrak{X}(\pi )\otimes Q(\pi)\right) (\mathfrak{X}(\sigma) u\otimes x^\times)\\
=& C_\mathfrak{X}(\mathfrak{X}(\sigma) u \otimes x^\times).
\end{align*}
\end{proof}
\begin{definition}
Suppose $G_{p}$ is the stabilizer subgroup of $p$ where $p=1,2, \cdots , m$.
The linear map
 $ T_{p} :U\longrightarrow U $
defined by
\[T_{p} =\frac{1}{|G_{p} |} \sum_{\sigma\in G_{p} }^{}\mathfrak{X}(\sigma)\]
is called the linear map corresponding to $p$.
\end{definition}

\begin{theorem}\label{T_p}
\begin{itemize}
\item[(a)] The linear map
$T_{p}$
is an orthogonal projection on $U$.
\item[(b)]
$rank~T_p=\frac{1}{|G_p|}\sum_{\sigma\in G_p}\chi(\sigma)$, where $\chi$ 
is the irreducible character of $G$ corresponding to the representation $\mathfrak{X}$.
\end{itemize}
\end{theorem}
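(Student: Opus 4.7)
For part (a), my plan is to mirror the argument of Theorem \ref{projection} at the level of $U$ alone. I would first check that $T_p$ is Hermitian: using $\mathfrak{X}(\sigma)^{\ast} = \mathfrak{X}(\sigma^{-1})$ (since $\mathfrak{X}$ is unitary) and the fact that inversion is a bijection of the subgroup $G_p$ onto itself, one has
$$T_p^{\ast} = \frac{1}{|G_p|}\sum_{\sigma \in G_p}\mathfrak{X}(\sigma^{-1}) = T_p.$$
Then I would verify idempotency by expanding $T_p^2$ as a double sum and applying the change of variables $\tau = \sigma\pi$ (valid because $G_p$ is a subgroup, so for each fixed $\sigma \in G_p$ the map $\pi \mapsto \sigma\pi$ is a bijection of $G_p$), obtaining
$$T_p^2 = \frac{1}{|G_p|^2}\sum_{\sigma \in G_p}\sum_{\pi \in G_p}\mathfrak{X}(\sigma\pi) = \frac{1}{|G_p|^2}\sum_{\sigma \in G_p}\sum_{\tau \in G_p}\mathfrak{X}(\tau) = T_p.$$
Together, being Hermitian and idempotent gives that $T_p$ is an orthogonal projection on $U$.

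For part (b), the key observation is that for any orthogonal projection on a finite-dimensional inner product space, the rank equals the trace. Since we have already shown in (a) that $T_p$ is such a projection, I would simply compute
$$\operatorname{rank} T_p = \operatorname{tr}(T_p) = \frac{1}{|G_p|}\sum_{\sigma \in G_p}\operatorname{tr}\mathfrak{X}(\sigma) = \frac{1}{|G_p|}\sum_{\sigma \in G_p}\chi(\sigma),$$
using linearity of the trace and the definition $\chi(\sigma) = \operatorname{tr}\mathfrak{X}(\sigma)$ of the character associated with $\mathfrak{X}$.

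There is no real obstacle here; both parts are essentially bookkeeping. The only subtle point worth emphasizing is that the computation in (a) relies crucially on $G_p$ being a \emph{subgroup} of $G$ (not merely a subset), so that it is closed under inversion and multiplication — which is exactly what makes the stabilizer structure the right one for building a projection. Part (b) then costs nothing once (a) is in hand, because the trace-equals-rank identity for orthogonal projections is standard.
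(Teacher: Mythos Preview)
Your proposal is correct and follows essentially the same approach as the paper: for (a) you verify Hermiticity via $\mathfrak{X}(\sigma)^{\ast}=\mathfrak{X}(\sigma^{-1})$ and idempotency via the substitution $\tau=\sigma\pi$ in the double sum over $G_p$, and for (b) you use $\operatorname{rank}=\operatorname{tr}$ for orthogonal projections together with linearity of the trace. Your added remark that the argument hinges on $G_p$ being a subgroup is a worthwhile clarification not made explicit in the paper.
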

\begin{proof}
(a) We first prove that $T_{p}$ is Hermitian. We have
$$
T_{p}^{\ast}=\left(\frac{1}{|G_p|}\sum_{\sigma\in G_p}^{}\mathfrak{X}(\sigma)\right)^{\ast}\\
=\frac{1}{|G_p|}\sum_{\sigma\in G_{p}}\mathfrak{X}(\sigma)^{\ast}\\
=\frac{1}{|G_p|}\sum_{\sigma\in G_{p}}\mathfrak{X}(\sigma^{-1})\\
= T_{p}\hspace{1mm} .
$$
 Now we show that $T_{p}$ is idempotent. We have
\begin{align*}
T_{p }^2 =&\left(\frac{1}{|G_p|}\sum_{\sigma\in G_{p}}\mathfrak{X}(\sigma)\right)\left(\frac{1}{|G_p|}\sum_{\pi\in G_{p}}\mathfrak{X}(\pi)\right)\\=
&\frac{1}{|G_p|^2}\sum_{\sigma\in G_{p}}\sum_{\pi\in G_{p}}\mathfrak{X}(\sigma)\mathfrak{X}(\pi)\\
&\frac{1}{|G_p|^2}\sum_{\sigma\in G_{p}}\sum_{\pi\in G_{p}}\mathfrak{X}(\sigma\pi)\\
=&\frac{1}{|G_p|^2}\sum_{\sigma\in G_{p}}\sum_{\tau\in G_{p}}\mathfrak{X}(\tau)\hspace{10mm}(\sigma\pi=\tau)\\
=&\frac{1}{|G_p|^2}\sum_{\sigma\in G_{p}}|G_p| T_{p}\\
=&T_{p} \; .
 \end{align*}
(b)
$$
\text{rank}~T_p=tr~(T_p)=tr \left[\frac{1}{|G_p|}\sum_{\sigma\in G_p}\mathfrak{X}(\sigma)\right]=\frac{1}{|G_p|}\sum_{\sigma\in G_p}\chi (\sigma).
$$
\end{proof}


In this paper, we study some important properties of the vector space $V^\mathfrak{X}(G)$.
\section{\bf The generalized Cartesian Symmetry Classes}
Suppose $\mathbb{F}=\lbrace u_1,\cdots,u_r\rbrace $ and $\mathbb{E}=\lbrace e_1,\cdots,e_n \rbrace $
are orthonormal bases for unitary
spaces $U$ and $V$, respectively. Assume $[\mathfrak{X}(\sigma)]_{\mathbb{F}}=[m_{ij}(\sigma)]$ for any $\sigma \in G$. For $1\leq i\leq n$ and $1\leq j\leq m$, let
$$
e_{ij}=(\delta_{1j}e_{i},\delta_{2j}e_{i},\cdots , \delta_{mj}e_{i}) \in \times^{m}V.
$$
Then the set
$$
\mathbb{B}=\lbrace u_{k}\otimes e_{ij}~|~1\leq k\leq r,  1\leq i \leq n ,1\leq j \leq m \rbrace
$$
is an orthonormal basis of $U\otimes \times^{m}V$.
 Therefore,
 $$
 V^\mathfrak{X}(G) = \langle C_\mathfrak{X}(u_k\otimes e_{ij}) \mid  1\leq k\leq r, 1\leq i\leq n, 1\leq j\leq m \rangle.
 $$
The elements 
$$
C_\mathfrak{X}(u_k\otimes e_{ij}),~ 1\leq k\leq r,~ 1\leq i\leq n,~ 1\leq j\leq m 
$$
of $V^{\mathfrak{X}}(G)$ are called {\em the generalized Cartesian standard symmetrized vectors}.
 \begin{definition}
For any
$ 1\leqslant j, s \leqslant m$, we define the linear map
 $ T_{sj} :U\longrightarrow U $
by
\[
T_{sj} =\frac{1}{|G_{sj} |} \sum_{\sigma\in G_{sj} }^{}\mathfrak{X}(\sigma),
\]
where
 $$
G_{sj}=\lbrace{\sigma \in G}~|~{\sigma(j)} =s \rbrace.
$$
If
$G_{sj}$ is empty, then we define  $T_{sj }=0$.
 If
 $s=j$, then $G_{jj}=G_j$, the stabilizer of $j$ in $G$ and so $T_{jj}=T_j$, the linear map corresponding to $j$.
\end{definition}

 \begin{theorem}\label{orthogonal}
 For any
 $1\leqslant j,s \leqslant m, 1\leqslant i,r \leqslant n,   1\leqslant k,l  \leqslant r$,
we have
\begin{equation*}
\langle C_{\mathfrak{X}}(u_k\otimes e_{ij}), C_{\mathfrak{X}}(u_l\otimes e_{rs})\rangle = \left\{ \begin{array}{ll} 0 &  s \nsim j \\
\delta_{ir} \dfrac{|G_{sj}|}{|G|}\langle T_{sj}u_k , u_l\rangle &  s \sim j
\end{array}\right.
\end{equation*}
\vspace{0.5 cm}
     In particular,
     $$
     \parallel C_\mathfrak{X}(u_k\otimes e_{ij})\parallel^2= \frac{1}{[G:G_j]}\parallel T_j u_k\parallel^2.
     $$
\end{theorem}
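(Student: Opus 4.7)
The plan is to reduce the two-sided projection inner product to a one-sided one by invoking Theorem \ref{projection}, then apply Theorem \ref{inner} with an explicit description of the relevant Gram matrix.

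First, since $C_{\mathfrak{X}}$ is self-adjoint and idempotent (Theorem \ref{projection}), I would write
\[
\langle C_{\mathfrak{X}}(u_k\otimes e_{ij}), C_{\mathfrak{X}}(u_l\otimes e_{rs})\rangle = \langle C_{\mathfrak{X}}(u_k\otimes e_{ij}), u_l\otimes e_{rs}\rangle.
\]
This puts the inner product into the exact form of Theorem \ref{inner}, so it suffices to identify the matrix $A=[a_{pq}]\in \mathbb{C}_{m\times m}$ with entries $a_{pq}=\langle (e_{ij})_p,(e_{rs})_q\rangle$ and to compute $Tr_{\mathfrak{X}}(A)$.

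The key observation is that $(e_{ij})_p = \delta_{pj}e_i$ and $(e_{rs})_q = \delta_{qs}e_r$, so
\[
a_{pq} = \delta_{pj}\,\delta_{qs}\,\delta_{ir},
\]
i.e.\ $A$ has a single (possibly nonzero) entry, in position $(j,s)$, equal to $\delta_{ir}$. Plugging this into the definition of the generalized trace,
\[
Tr_{\mathfrak{X}}(A) = \sum_{\sigma\in G}\mathfrak{X}(\sigma)\sum_{p=1}^{m}a_{p\sigma(p)} = \delta_{ir}\sum_{\sigma\in G}\delta_{\sigma(j),s}\,\mathfrak{X}(\sigma) = \delta_{ir}\sum_{\sigma\in G_{sj}}\mathfrak{X}(\sigma) = \delta_{ir}\,|G_{sj}|\,T_{sj}.
\]
If $s\nsim j$ then $G_{sj}=\emptyset$, so $Tr_{\mathfrak{X}}(A)=0$ and the inner product vanishes; otherwise Theorem \ref{inner} gives exactly
\[
\langle C_{\mathfrak{X}}(u_k\otimes e_{ij}), C_{\mathfrak{X}}(u_l\otimes e_{rs})\rangle = \frac{1}{|G|}\langle Tr_{\mathfrak{X}}(A)u_k,u_l\rangle = \delta_{ir}\,\frac{|G_{sj}|}{|G|}\,\langle T_{sj}u_k,u_l\rangle,
\]
which is the stated formula.

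For the ``in particular'' part I set $s=j$, $r=i$, $l=k$, so $T_{sj}=T_j$ and $|G_{sj}|/|G|=1/[G:G_j]$. Then, using that $T_j$ is an orthogonal projection (Theorem \ref{T_p}(a)), $\langle T_j u_k, u_k\rangle = \langle T_j^{2}u_k,u_k\rangle = \langle T_j u_k, T_j u_k\rangle = \|T_j u_k\|^{2}$, giving the claimed norm formula. The only thing to be careful about is the convention $T_{sj}=0$ when $G_{sj}=\emptyset$, which matches the vanishing of $Tr_{\mathfrak{X}}(A)$ in that case; beyond this bookkeeping the argument is a direct computation, and I do not anticipate a real obstacle.
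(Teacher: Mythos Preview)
Your proposal is correct and follows essentially the same approach as the paper: reduce to a one-sided inner product via Theorem~\ref{projection}, apply Theorem~\ref{inner} with the explicit Gram entries $a_{pq}=\delta_{pj}\delta_{qs}\delta_{ir}$, and then split on whether $s\sim j$; the norm formula is likewise obtained by specializing and using that $T_j$ is an orthogonal projection. The only cosmetic difference is that you package the sum as $\delta_{ir}\,|G_{sj}|\,T_{sj}$ before pairing with $u_k,u_l$, whereas the paper keeps the sum inside the inner product until the end.
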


\begin{proof}
By using Theorems \ref{projection} and \ref{inner}, we have
\begin{align*}
\langle C_\mathfrak{X}(u_k\otimes e_{ij}) , C_\mathfrak{X}(u_l\otimes e_{rs})\rangle=&\langle C_\mathfrak{X}(u_k\otimes e_{ij}) , u_l\otimes e_{rs}\rangle
 \\
 =&\dfrac{1}{|G|}\langle Tr_\mathfrak{X}(A)u_k, u_l\rangle \\
 =&\dfrac{1}{|G|}\langle\sum_{\sigma\in G}\mathfrak{X}(\sigma){\sum_{p=1}^{m}} a_{p\sigma( p)} u_k , u_l\rangle ,\\
\end{align*}
where
 $$
a_{pq}=\langle \delta_{pj}e_{i}, \delta_{qs}e_r\rangle=\delta_{pj}\delta_{qs}\langle e_i, e_r\rangle=\delta_{pj}\delta_{qs}\delta_{ir}.
$$
Therefore
 \begin{align*}
\langle C_\mathfrak{X}(u_k\otimes e_{ij}) , C_\mathfrak{X}(u_l\otimes e_{rs})\rangle =&\dfrac{1}{|G|}\langle\sum_{\sigma\in G}\mathfrak{X}(\sigma){\sum_{p=1}^{m}}\delta_{pj}\delta_{\sigma (p)s}\delta_{ir} u_k,u_l\rangle\\
 =&\delta_{ir}\dfrac{1}{|G|}\langle\sum_{\sigma\in G}\mathfrak{X}(\sigma)\delta_{\sigma(j)s} u_k,u_l\rangle\\
 =&\left\{\begin{array}{ll} 0 & s \nsim j\\ 
 \delta_{ir}\dfrac{1}{|G|}\langle\sum_{\sigma\in G_{sj}}\mathfrak{X}(\sigma) u_k,u_l\rangle& s \sim j\end{array}\right.\\ 
=&\left\{\begin{array}{ll} 0 & s \nsim j\\ 
\delta_{ir} \dfrac{|G_{sj}|}{|G|}\langle T_{sj}u_k ,u_l\rangle& s \sim j\end{array}\right.\\ 
\end{align*}
  In particular
\begin{align*}
\parallel C_\mathfrak{X}(u_k\otimes e_{ij})\parallel^2=&\langle C_\mathfrak{X}(u_k\otimes e_{ij}) , C_\mathfrak{X}(u_k\otimes e_{ij})\rangle\\
 =&\dfrac{|G_{j}|}{|G|}\langle T_{j}u_k ,u_k\rangle\\
 =&\dfrac{1}{[G:G_{j}]}\langle T_{j} u_k , T_{j} u_k\rangle  \hspace{15mm}  (T_{j}^{2}=T_{j}=T_{j}^{\ast})\\
 =&\dfrac{1}{[G:G_{j}]} \parallel T_ju_k \parallel^2.
\end{align*}
\end{proof}
From the above Theorem, we deduce that $C_\mathfrak{X}(u_k\otimes e_{ij}) = 0$ if and only if $T_{j}u_k = 0$.
For any $1\leq k\leq r$, let
$$
\Omega_{k}=\{1\leq j\leq m ~|~ T_{j}u_{k} \neq 0 \}.
$$
Put
$\Omega=\bigcup_{k=1}^{r}\Omega_{k}$. Then $\Omega=\{1\leq j\leq m ~|~T_{j}\neq 0\}$.
By Theorem \ref{T_p}, $T_j\neq 0$ if and only if $\sum_{\sigma\in G_j}\chi (\sigma)\neq 0$.
Hence 
$$
\Omega=\{1\leq j\leq m\mid \sum_{\sigma\in G_j}\chi (\sigma)\neq 0 \}=\{1\leq j\leq m\mid [\chi , 1_{G_j}]\neq 0 \},
 $$
 where $[\hspace{1.5mm},\hspace{1.5mm}]$ is the inner product of characters (see \cite{Isaacs}).

 Let $\bar{\mathcal{D}}=\mathcal{D} \cap \Omega$. For each $1\leq j\leq m $ and $ 1\leq i\leq n$, the subspace
$$
 V^\mathfrak{X}_{ij}(G) = \langle C_\mathfrak{X}(u_k\otimes e_{ij}) \mid  1\leq k\leq r  \rangle
 $$
   is called the {\em  generalized cyclic  subspace}.
   If $\text{dim}\ U=1$, then $ V^\mathfrak{X}_{ij}(G)$ reduces to $ V^{\chi}_{ij}(G)$, the cyclic subspace associated with $G$ and the irreducible character $\chi$ of $G$ (see \cite{Gholami, Zamani}). \\


Since
$ \langle \mathfrak{X}(\sigma) u_1: \sigma \in G  \rangle$
is a non-zero submodule of the irreducible $C[G]$-module $U$, so
$ \langle \mathfrak{X}(\sigma) u_1: \sigma \in G  \rangle =U$. Therefore it is to see that for every $1\leq j\leq m $ and $ 1\leq i\leq n$,
 $$
 V^\mathfrak{X}_{ij}(G) = \langle C_\mathfrak{X}(u_1\otimes e_{i\sigma(j)}) \mid  \sigma \in G  \rangle.
 $$
 For each $ 1\leq i\leq n$, we define
$$
 V^{\mathfrak{X}}_{i}(G) = \langle C_\mathfrak{X}(u_k\otimes e_{ij}) \mid  1\leq k\leq r, 1\leq j\leq m  \rangle.
 $$
 By Theorem \ref{orthogonal}, if $i\neq r$ then $V^\mathfrak{X}_{i}(G) \perp V^\mathfrak{X}_{r}(G)$. Thus
 $V^\mathfrak{X}(G) =\bigoplus_{i=1}^{n}V^\mathfrak{X}_{i}(G)~(\text{orthogonal})$.
 For $1\leq j, s\leq m$, if $j\sim s$ then by Lemma \ref{1}, $V^\mathfrak{X}_{ij}(G)=V^\mathfrak{X}_{is}(G)$, otherwise $V^\mathfrak{X}_{ij}(G)\perp V^\mathfrak{X}_{is}(G)$, by Theorem \ref{orthogonal}.
 Hence
 $$
V^{\mathfrak{X}}_{i}(G)=\bigoplus_{j\in\bar{\mathcal{D}}}V^\mathfrak{X}_{ij}(G)~(\text{orthogonal}).
$$
Therefore
$$
V^{\mathfrak{X}}(G)=\bigoplus_{i=1}^{n}\bigoplus_{j\in\bar{\mathcal{D}}}V^{\mathfrak{X}}_{ij}(G)~(\text{orthogonal}).
$$
The following theorem provides a formula for computing the dimension of the generalized cyclic subspace.
\begin{theorem}
 Let $\mathfrak{X}$ be an irreducible unitary representation of $G$ over a unitary space $U$. Suppose $\mathfrak{X}$ affords the irreducible character $\chi$ of $G$. If $j\in\bar{\mathcal{D}}$ then
$$ 
\dim\hspace{1mm}V^{\mathfrak{X}}_{ij}(G)=[\chi , 1_{G_{j}}].
$$
\end{theorem}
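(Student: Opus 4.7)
The plan is to identify $\dim V^{\mathfrak{X}}_{ij}(G)$ with the rank of the linear map $T_j$, and then to invoke Theorem \ref{T_p}(b) to rewrite this rank as the desired character inner product.

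First I would recall the standard fact that for a finite family of vectors $v_1,\ldots,v_r$ in a unitary space, the dimension of $\langle v_1,\ldots,v_r\rangle$ equals the rank of the Gram matrix $[\langle v_k,v_l\rangle]_{k,l}$. Applying this to the spanning set $\{C_\mathfrak{X}(u_k\otimes e_{ij})\}_{k=1}^{r}$ of $V^{\mathfrak{X}}_{ij}(G)$, and using the inner product formula from Theorem \ref{orthogonal} in the case $s=j$, $r=i$, the Gram matrix becomes
$$
\Bigl[\langle C_\mathfrak{X}(u_k\otimes e_{ij}),\, C_\mathfrak{X}(u_l\otimes e_{ij})\rangle\Bigr]_{k,l}
=\frac{|G_j|}{|G|}\bigl[\langle T_j u_k, u_l\rangle\bigr]_{k,l}.
$$
Since $|G_j|/|G|\neq 0$, the rank of this Gram matrix equals the rank of $[\langle T_j u_k,u_l\rangle]_{k,l}$, which is (the transpose of) the matrix of $T_j$ in the orthonormal basis $\mathbb{F}=\{u_1,\ldots,u_r\}$; hence its rank is $\operatorname{rank} T_j$.

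Next I would apply Theorem \ref{T_p}(b), which gives
$$
\operatorname{rank} T_j=\frac{1}{|G_j|}\sum_{\sigma\in G_j}\chi(\sigma)=[\chi|_{G_j},1_{G_j}]_{G_j}=[\chi,1_{G_j}],
$$
the last equality being the definition of the inner product used in the paper. Combining the two steps yields $\dim V^{\mathfrak{X}}_{ij}(G)=[\chi,1_{G_j}]$, as required. The hypothesis $j\in\bar{\mathcal{D}}$ is only used to ensure nontriviality (so that the value $[\chi,1_{G_j}]$ is positive and agrees with the nonzero cyclic subspace), but the proof itself does not require this. I do not foresee a serious obstacle: the only subtlety is making sure the Gram matrix is interpreted correctly and that rank is invariant under transposition/conjugation, both of which are standard.
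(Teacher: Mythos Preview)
Your argument is correct. The Gram--matrix identity you use is standard, Theorem~\ref{orthogonal} gives exactly the entries you need, and since $T_j$ is Hermitian the matrix $[\langle T_j u_k,u_l\rangle]_{k,l}$ is the conjugate (hence same rank) of the matrix of $T_j$ in the orthonormal basis $\mathbb{F}$. Invoking Theorem~\ref{T_p}(b) then finishes the computation cleanly.

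Your route, however, is genuinely different from the paper's. The paper does not use the Gram matrix of the $r$ generators $C_\mathfrak{X}(u_k\otimes e_{ij})$; instead it passes to the larger $rt$--dimensional subspace $W_{ij}=\langle u_k\otimes e_{i\sigma(j)}:1\le k\le r,\ \sigma\in G\rangle$ (with $t=[G:G_j]$), observes that $W_{ij}$ is $C_\mathfrak{X}$--invariant, writes down the $rt\times rt$ matrix $B$ of the restriction $C_\mathfrak{X}|_{W_{ij}}$, verifies directly that $B^2=B$, and then computes $\dim V_{ij}^{\mathfrak{X}}(G)=\operatorname{tr}B=[\chi,1_{G_j}]$. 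In effect the paper redoes, in coordinates on $W_{ij}$, the idempotence already established globally in Theorem~\ref{projection}. Your approach is shorter and more conceptual: it exploits the already--proved facts that $T_j$ is an orthogonal projection (Theorem~\ref{T_p}) and the inner--product formula (Theorem~\ref{orthogonal}), so the only linear--algebra ingredient needed is the rank--of--Gram--matrix lemma. The paper's approach, on the other hand, yields an explicit matrix for $C_\mathfrak{X}$ on $W_{ij}$, which could be useful if one wanted more than the dimension.
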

\begin{proof}
Let $j\in\bar{\mathcal{D}}$, $[G:G_j]=t$ and  $G=\bigcup_{i=1}^{t}\sigma_{i} G_j$,
be the left coset decomposition of $G_{j}$ in  $G$. Then $|Orb_{G}~(j)|=t$.
Suppose
$$
Orb_{G}~(j)=\lbrace \sigma_{1}(j), \cdots, \sigma_{t}(j) \rbrace .
$$
Notice that
\begin{align*}
V^\mathfrak{X}_{ij}(G)=C_{\mathfrak{X}}(W_{ij}),
\end{align*}
where
\begin{align*}
W_{ij}=\left\langle u_k\otimes e_{i \sigma (j)}\mid 1\leq k\leq r,~\sigma\in G\right\rangle .
\end{align*}
Then
\begin{align*}
\mathbb{E}_{ij}=\{ u_k\otimes e_{i \sigma_{s} (j)} \mid 1\leq k\leq r,~1\leq s\leq t\}
\end{align*}
 is a basis of $W_{ij}$  but the set $C_{\mathfrak{X}}(\mathbb{E}
_{ij})$ may not be a basis for $V^\mathfrak{X}_{ij}(G)$.
Since $W_{ij}$ is an invariant subspace of $C_\mathfrak{X}$, so the restriction
 $C_\mathfrak{X} \mid _{W_{ij}}=C_{\mathfrak{X}}^{ij} :  W_{ij} \to W_{ij}$
 is a linear operator.
 We put
 $$
  [C^{ij}_{\mathfrak{X}}]_{\mathbb{E}_{ij}}=B=[b_{(k,l),(p,q)}].
 $$
Now we have
 \begin{align*}
  C^{ij}_\mathfrak{X}(u_{p}\otimes  e_{i\sigma_{q}(j)})=&C_\mathfrak{X}(u_{p}\otimes  e_{i\sigma_{q}(j)})\\
  =&C_\mathfrak{X}(\mathfrak{X}(\sigma_{q}^{-1})u_p\otimes e_{ij})\\
 =&\frac{1}{|G|}\sum_{\sigma\in G}\mathfrak{X}(\sigma\sigma_{q}^{-1})u_p\otimes  e_{i\sigma(j)}\\
=&\frac{1}{|G|}\sum_{l=1}^t\left(\sum_{\sigma\in \sigma_{l}{G_j}}\mathfrak{X}(\sigma \sigma_{q}^{-1})u_p\otimes e_{i\sigma(j)}\right)\\
 \hspace{60mm}=&\frac{1}{|G|}\sum_{l=1}^t(\sum_{\tau\in G_{j}}\mathfrak{X}(\sigma_l\tau\sigma_{q}^{-1})u_p\otimes  e_{i\sigma_{l}(j)})\\
  =&\frac{1}{|G|}\sum_{l=1}^t\sum_{\tau\in G_j}\sum_{k=1}^{r} m_{kp}(\sigma_l\tau\sigma_{q}^{-1})u_{k}\otimes  e_{i\sigma_{l}(j)}\\
  =&\sum_{l=1}^t\sum_{k=1}^r\left[\frac{1}{|G|}\sum_{\tau\in G_j} m_{kp}(\sigma_l\tau\sigma_{q}^{-1})\right]u_{k}\otimes  e_{i\sigma_{l}(j)} .
\end{align*}
So
 $$
 b_{(k,l),(p,q)}=\frac{1}{|G|}\sum_{\tau\in G_j} m_{kp}(\sigma_l\tau\sigma_{q}^{-1}).
 $$
 We prove that  $B$ is an idempotent matrix. We have
\begin{align*}
(B^2)_{(k,l),(k^\prime,l^\prime)}=&\sum_{p=1}^r\sum_{q=1}^t b_{(k,l),(p,q)} b_{(p,q),(k^\prime,l^\prime)}\\
=&\sum_{p=1}^r\sum_{q=1}^t \left(\frac{1}{|G|}\sum_{\tau\in G_j}m_{kp}(\sigma_l\tau\sigma_{q}^{-1})\right)\left(\frac{1}{|G|}\sum_{\mu \in G_j}m_{pk^\prime}(\sigma_q\mu\sigma_{l^\prime}^{-1})\right)\\
=&\frac{1}{|G|^2}\sum_{p=1}^r\sum_{q=1}^t\sum_{\tau\in G_j}\sum_{\mu\in G_j} m_{kp}(\sigma_l\tau\sigma_q^{-1})m_{pk^\prime}(\sigma_q\mu\sigma_{l^\prime}^{-1})\\
=&\frac{1}{|G|^2}\sum_{\mu,\tau\in G_j}\sum_{q=1}^t m_{kk^\prime}(\sigma_l\tau\mu\sigma_{l^\prime}^{-1})\\
=&\frac{t|G_j|}{|G|^2}\sum_{g\in G_j}  m_{kk^\prime}(\sigma_lg\sigma_{l^\prime}^{-1})~~~~(g=\tau\mu)\\
=&\frac{1}{|G|}\sum_{g\in G_j}  m_{kk^\prime}(\sigma_lg\sigma_{l^\prime}^{-1})\\
=&B_{(k,l),(k^\prime,l^\prime)}.
\end{align*}
Thus
\[\text{dim}\hspace{1mm} V_{ij}^\mathfrak{X}(G)=\text{rank}\hspace{1mm} C_{\mathfrak{X}}^{ij}=\text{rank}\hspace{1mm} B=\text{tr}\hspace{1mm} B\]
Now we calculate  $\text{tr}\hspace{1mm} B$. We have
\begin{align*}
\text{tr}\hspace{1mm}B=&\sum_{k=1}^r\sum_{l=1}^t b_{(k,l),(k,l)}\\
=&\sum_{k=1}^r\sum_{l=1}^t\left(\frac{1}{|G|}\sum_{\tau\in G_j} m_{kk}(\sigma_l\tau\sigma_l^{-1})\right) \\
=&\frac{1}{|G|}\sum_{\tau \in G_j}\sum_{l=1}^t\sum_{k=1}^r m_{kk}(\sigma_l\tau\sigma_l^{-1})\hspace{10mm}\\
=&\frac{1}{|G|}\sum_{\tau\in G_j}\sum_{l=1}^t \chi(\sigma_l\tau\sigma_l^{-1})\\
=&\frac{1}{|G|}\sum_{\tau\in G_j}\sum_{l=1}^t \chi(\tau)\\
=&\frac{t}{|G|}\sum_{\tau\in G_j}\chi(\tau)\hspace{10mm}([G:G_j)]=t)\\
=&\frac{1}{|G_j|}\sum_{\tau\in G_j}\chi(\tau)\\
=&[\chi,1_{G_j}],
\end{align*}
so the result holds.
\end{proof}
Now we construct a basis for the generalized Cartesian symmetry class $V^\mathfrak{X}(G)$. Since
$V^{\mathfrak{X}}(G)=\bigoplus_{i=1}^{n}\bigoplus_{j\in \overline{D}}^{}V^\mathfrak{X}_{ij}(G)$, in order to find a basis for $V^{\mathfrak{X}}(G)$, it suffices to find a basis for the generalized
cyclic subspace $V^\mathfrak{X}_{ij}(G)$ for every $1\leq i\leq n$ and $j\in \bar{\mathcal{D}}$.
Let $j\in \overline{\mathcal{D}}$ and $\dim V^\mathfrak{X}_{ij}(G)=s_j$. Since
$$
V^\mathfrak{X}_{ij}(G) = \langle C_\mathfrak{X}(u_1\otimes e_{i\sigma(j)}) \mid  \sigma \in G  \rangle,
$$
 so we can choose the ordered subset
 $
\{j_{1}, \cdots, j_{s_{j}}\}
$
from the orbit of $j$, such that the set
$$
\{C_\mathfrak{X}(u_1\otimes e_{ij_{1}}), \cdots ,  C_\mathfrak{X}(u_1\otimes e_{ij_{s_j}})\}
$$
is a basis for the generalized cyclic subspace $V^\mathfrak{X}_{ij}(G)$.
 Execute this procedure for each $k\in\bar{\mathcal{D}}$.
If $\bar{\mathcal{D}} =\{j, k, l, \cdots  \}\ (j<k<l< \cdots)$,  take
$$
\hat{\mathcal{D}}=\{j_{1}, \cdots, j_{s_{j}}; k_{1},\cdots, k_{s_{k}}; \cdots\}
$$
 to be ordered as indicated.
 Then
 $$
\{ C_\mathfrak{X}(u_1\otimes e_{ij})~|~1\leq i\leq n,\  j\in \hat{\mathcal{D}}\}
$$
is a basis of $V^\mathfrak{X} (G)$. 
Hence
 \begin{align*}
 \dim V^\mathfrak{X} (G)=(\dim V)|\hat{\mathcal{D}}|
 =n \sum_{j \in{\bar{\mathcal{D}}}}s_{j}
=n \sum_{j \in{\bar{\mathcal{D}}}}[\chi, 1_{G_j}].
\end{align*}
If $\mathfrak{X}$ is a linear representation of $G$, then $\dim V^\mathfrak{X}_{ij}(G)=1$ and the set
$$
\{ C_\mathfrak{X}(u_1\otimes e_{ij})~|~1\leq i\leq n,\  j\in \hat{\mathcal{D}}\}
$$ 
is an orthogonal basis of $V^\mathfrak{X}(G)$ (such representations of $G$ are called {\em o.b.-representations}). 

\section{\bf Induced Linear Operators on Generalized Cartesian Symmetry Classes}
Let $S_m$ be the full symmetric group of degree $m$, and let $G$ be a subgroup of $S_m$. Let $U$ be a unitary vector space. Given a linear operator $T: V \to V$, we can define the linear operator $\times^{m}T :\times^{m}V \to \times^{m}V$ by
$$
(\times^{m}T) v^\times =(Tv_1,\cdots,Tv_m),
$$
where $v^\times \in \times^{m}V$. It is easy to see that $T\to \times^{m}T$ is an algebraic homomorphism.
 Moreover, $(\times^{m}T)Q(\sigma)=Q(\sigma)(\times^{m}T)$ for any $\sigma\in G$, which implies that
$$
C_\mathfrak{X}(I \otimes \times^{m}T)=(I \otimes \times^{m}T)C_\mathfrak{X},
$$
and hence, $V^{\mathfrak{X}}(G)$ is an invariant subspace of $U \otimes \times^{m}V$ under the mapping $C_\mathfrak{X}$. We denote the restriction of $I \otimes \times^{m}T$ to $V^{\mathfrak{X}}(G)$ by $K^{\mathfrak{X}}(T)$ and call it an induced operator. Note that $T\to K^{\mathfrak{X}}(T)$ is also an algebraic homomorphism.


\begin{theorem}\label{P_1}
Suppose $\mathfrak{X}$  is an irreducible unitary representation of $G$ over unitary space $U$ and let $S, T \in End~(V)$ and $V^{\mathfrak{X}}(G) \neq {0}$. Then
\begin{itemize}
\item[(a)]
$K^{\mathfrak{X}}(T)=K^{\mathfrak{X}}(S) \iff T=S$, \\

\item[(b)]
$K^{\mathfrak{X}}(T)$ is invertible if and only if $T$ is invertible. \\
 \end{itemize}
 \end{theorem}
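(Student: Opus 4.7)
The plan is to exploit the intertwining $(I\otimes\times^{m}T)C_{\mathfrak{X}}=C_{\mathfrak{X}}(I\otimes\times^{m}T)$ noted just above the statement, evaluate $K^{\mathfrak{X}}(T)$ on the standard symmetrized generators $C_{\mathfrak{X}}(u_{k}\otimes e_{ij})$, and then use the orthogonality formula of Theorem~\ref{orthogonal} together with the hypothesis $V^{\mathfrak{X}}(G)\neq\{0\}$ to produce enough linearly independent vectors. The direction $T=S\Rightarrow K^{\mathfrak{X}}(T)=K^{\mathfrak{X}}(S)$ in (a) and the forward implication of (b) are cheap, so the work is concentrated in the reverse directions of each part.

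For part (a), suppose $K^{\mathfrak{X}}(T)=K^{\mathfrak{X}}(S)$. Since $V^{\mathfrak{X}}(G)=\bigoplus_{i=1}^{n}\bigoplus_{j\in\bar{\mathcal{D}}}V^{\mathfrak{X}}_{ij}(G)\neq\{0\}$, there exists $j\in\bar{\mathcal{D}}$, hence (by the definition of $\bar{\mathcal{D}}=\mathcal{D}\cap\Omega$) some index $k$ with $T_{j}u_{k}\neq 0$. Fix such a pair $(j,k)$. For each $1\le i\le n$ I would expand $Te_{i}=\sum_{r}t_{ri}e_{r}$ and $Se_{i}=\sum_{r}s_{ri}e_{r}$, and then compute
\[
K^{\mathfrak{X}}(T)C_{\mathfrak{X}}(u_{k}\otimes e_{ij})=C_{\mathfrak{X}}\bigl(u_{k}\otimes(\times^{m}T)e_{ij}\bigr)=\sum_{r=1}^{n}t_{ri}\,C_{\mathfrak{X}}(u_{k}\otimes e_{rj}),
\]
and similarly for $S$. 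By Theorem~\ref{orthogonal}, the vectors $\{C_{\mathfrak{X}}(u_{k}\otimes e_{rj})\}_{r=1}^{n}$ are pairwise orthogonal with squared norm $\frac{1}{[G:G_{j}]}\|T_{j}u_{k}\|^{2}>0$, hence linearly independent. Equating the two expansions forces $t_{ri}=s_{ri}$ for every $r$, and since $i$ was arbitrary, $T=S$.

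For part (b), the forward direction is algebraic: $T\mapsto K^{\mathfrak{X}}(T)$ is an algebra homomorphism (noted in the excerpt), so $K^{\mathfrak{X}}(T)K^{\mathfrak{X}}(T^{-1})=K^{\mathfrak{X}}(I)=\mathrm{id}_{V^{\mathfrak{X}}(G)}$. For the converse I would argue contrapositively: if $T$ is not invertible, pick $v\in V\setminus\{0\}$ with $Tv=0$ and write $v=\sum_{i}c_{i}e_{i}$. With $(j,k)$ as in part (a), set
\[
w=\sum_{i=1}^{n}c_{i}\,C_{\mathfrak{X}}(u_{k}\otimes e_{ij})=C_{\mathfrak{X}}\Bigl(u_{k}\otimes\sum_{i}c_{i}e_{ij}\Bigr).
\]
The same orthogonality from Theorem~\ref{orthogonal} shows $w\neq 0$ (some $c_{i}\neq 0$ and the summands are orthogonal with nonzero norm), while
\[
K^{\mathfrak{X}}(T)w=C_{\mathfrak{X}}\Bigl(u_{k}\otimes(\times^{m}T)\sum_{i}c_{i}e_{ij}\Bigr)=0,
\]
because $(\times^{m}T)\sum_{i}c_{i}e_{ij}$ has $Tv=0$ in slot $j$ and zeros elsewhere. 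Thus $K^{\mathfrak{X}}(T)$ has nontrivial kernel and so is not invertible.

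The only delicate point in both parts is the same: translating $V^{\mathfrak{X}}(G)\neq\{0\}$ into the concrete existence of an index $j\in\bar{\mathcal{D}}$ and a basis vector $u_{k}$ with $T_{j}u_{k}\neq 0$, so that the orthogonality formula yields a genuinely independent family of standard symmetrized vectors. Once that is pinned down, both implications reduce to the one-line computation $K^{\mathfrak{X}}(T)C_{\mathfrak{X}}(u_{k}\otimes e_{ij})=\sum_{r}t_{ri}C_{\mathfrak{X}}(u_{k}\otimes e_{rj})$ and an appeal to Theorem~\ref{orthogonal}.
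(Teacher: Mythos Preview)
Your proof is correct, and for part~(b) it is essentially the same contrapositive argument as the paper's (the paper just rechooses the orthonormal basis so that $e_{1}\in\ker T$, whereas you expand $v$ in coordinates and invoke the orthogonality of the $C_{\mathfrak{X}}(u_{k}\otimes e_{ij})$ in the $i$-index). For part~(a), however, your route is genuinely different and more elementary: the paper applies Theorem~\ref{Corollary 5.9} to translate $C_{\mathfrak{X}}(u_{k}\otimes(\times^{m}T)e_{ij})=C_{\mathfrak{X}}(u_{k}\otimes(\times^{m}S)e_{ij})$ into the trace conditions $Tr_{\mathfrak{X}}(CC^{\ast})=Tr_{\mathfrak{X}}(CD^{\ast})=Tr_{\mathfrak{X}}(DD^{\ast})$, then takes the ordinary trace, uses $\sum_{\sigma\in G_{j}}\chi(\sigma)\neq 0$ to strip the character sum, and finally appeals to the equality case of Cauchy--Schwarz in the Frobenius norm to force $A=B$. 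You bypass all of this by observing directly (as the paper itself does later in the proof of Theorem~\ref{rank}) that $K^{\mathfrak{X}}(T)C_{\mathfrak{X}}(u_{k}\otimes e_{ij})=\sum_{r}t_{ri}\,C_{\mathfrak{X}}(u_{k}\otimes e_{rj})$ and then reading off the coefficients from the orthogonal family supplied by Theorem~\ref{orthogonal}. Your argument is shorter and avoids the $Tr_{\mathfrak{X}}$ machinery; the paper's argument, on the other hand, illustrates how Theorem~\ref{Corollary 5.9} can be used as a general tool for detecting equality of symmetrized vectors.
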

\begin{proof}
(a) Let $K^{\mathfrak{X}}(T)=K^{\mathfrak{X}}(S)$. Then for each $1\leq i\leq n$, $1\leq j\leq m$ and $u\in U$, we have
$$
K^{\mathfrak{X}}(T)(C_\mathfrak{X}(u_k \otimes e_{ij}))=K^{\mathfrak{X}}(S)(C_\mathfrak{X}(u_k \otimes e_{ij})).
$$
So
$$
C_\mathfrak{X}(u_k \otimes(\delta_{1j}Te_{i},\delta_{2j}Te_{i},\cdots , \delta_{mj}Te_{i}))=C_\mathfrak{X}(u_k \otimes (\delta_{1j}Se_{i},\delta_{2j}Se_{i},\cdots , \delta_{mj}Se_{i})).
$$
We put
$$
x_\ell=\delta_{\ell j}Te_{i}= \delta_{\ell j}\sum_{p=1}^n a_{pi}e_p=\sum_{p=1}^n \delta_{\ell j}a_{pi}e_p,
$$
$$
y_\ell =\delta_{\ell j}Se_{i}= \delta_{\ell j}\sum_{p=1}^n b_{pi}e_p=\sum_{p=1}^n \delta_{\ell j}b_{pi}e_p.
$$
Now we define two matrices $C$ and $D$ as follows:
$$ 
C=(C_{\ell p})=(\delta_{\ell j}a_{pi}),~~ D=(D_{\ell p})=(\delta_{\ell j}b_{pi}).
$$
Using Theorem \ref{Corollary 5.9}, we get
\begin{eqnarray}
Tr_\mathfrak{X}(CC^*) = Tr_\mathfrak{X}(CD^*)=Tr_\mathfrak{X}(DD^*). 
\end{eqnarray}
We can easily see that
\begin{eqnarray}
 (CC^*)_{\ell p} &=&\delta_{\ell j}\delta_{p j}\sum_{q=1}^{n}|a_{qi}|^2     \\
(DD^*)_{\ell p} &=&\delta_{\ell j}\delta_{p j}\sum_{q=1}^{n}|b_{qi}|^2    \\
 (CD^*)_{\ell p} &=&\delta_{\ell j}\delta_{p j}\sum_{q=1}^{n}a_{qi} \bar{b}_{qi} \\
 Tr_\mathfrak{X}(CC^*) &=&\sum_{q=1}^n|a_{qi}|^2 \sum_{\sigma \in G_j}\mathfrak{X}(\sigma)    \\
 Tr_\mathfrak{X}(DD^*) &=&\sum_{q=1}^n|b_{qi}|^2   \sum_{\sigma \in G_j}\mathfrak{X}(\sigma)    \\
 Tr_\mathfrak{X} (CD^*) &=&\sum_{q=1}^n a_{qi} \bar{b}_{qi} \sum_{\sigma \in G_j}\mathfrak{X}(\sigma).
\end{eqnarray}
Applying the trace map on Equations (1), (5), (6), (7),  we get
\begin{eqnarray}
\sum_{q=1}^{n}|a_{qi}|^2 \sum_{\sigma \in G_j}\chi(\sigma)
=\sum_{q=1}^{n}|b_{qi}|^2 \sum_{\sigma \in G_j}\chi(\sigma)
=\sum_{q=1}^{mn}{a_{qi} \bar{b}_{qi}} \sum_{\sigma \in G_j}\chi(\sigma).
\end{eqnarray}
If we choose $j\in \bar{D}$, then $ \sum_{\sigma \in G_j}\chi(\sigma)\neq {0}$.  Hence from Equation (8), we obtain
\begin{eqnarray}
\sum_{q=1}^{n}|a_{qi}|^2
=\sum_{q=1}^{n}|b_{qi}|^2
=\sum_{q=1}^{n}{a_{qi} \bar{b}_{qi}}~(1\leq i\leq n).
\end{eqnarray}
Thus
\begin{eqnarray}
\sum_{i=1}^{n}\sum_{q=1}^{n}|a_{qi}|^2
=\sum_{i=1}^{n}\sum_{q=1}^{n}|b_{qi}|^2
=\sum_{i=1}^{n}\sum_{q=1}^{n}{a_{qi} \bar{b}_{qi}},
\end{eqnarray}
which is equivalent to
$$
tr(A^*A) = tr(B^*B) = tr(B^*A),
$$
or
\begin{eqnarray}
||A||^2=||B||^2=<A, B>,
\end{eqnarray}
where $||.||$ is the Frobenius norm $\mathbb{C}_{n\times n}$.
From the equality condition in the Cauchy-Schwarz inequality, there exists a real number $\lambda$ such that  $A=\lambda B$.
Now by substituting in Equation (11), we get
$\lambda =1$ and then $A=B$. Therefore $T=S$.
The converse is obvious.\\

(b) If $T$ is invertible then $K^\mathfrak{X}(T)$ is invertible because $K^\mathfrak{X}$ is an algebraic homomorphism.\\
Conversely, if $K^\mathfrak{X}(T)$ is invertible then we prove that $T$ is invertible. To show this, suppose that $T$ is a singular operator. Then there exists a non-zero vector $e_{1}\in V$ such that $Te_{1}=0$. We can extend the set $\lbrace e_{1}\rbrace$ to an orthonormal basis $\lbrace e_1,\cdots,e_n\rbrace$ for $V$. Let $\lbrace u_1,\cdots,u_r\rbrace$ be an orthonormal basis for the unitary space $U$. Since $V^{\mathfrak{X}}(G)\neq 0$, we have $\bar{\mathcal{D}}\neq \emptyset$. Choose $j\in \bar{\mathcal{D}}$, then $j$ belongs to $\Omega=\cup_{1}^{r} \Omega_{k}$. Therefore, there exists $1\leq k\leq r$ such that $j\in \Omega_{k}$. It follows that $ C_\mathfrak{X}(u_k\otimes e_{ij})\neq 0$. Now we have
\begin{align*}
K^{\mathfrak{X}}(T)C_\mathfrak{X}(u_k \otimes e_{1j})=&(I\otimes \times^{m}T)C_\mathfrak{X}(u_k \otimes e_{1j})\\
=&C_\mathfrak{X}(u_k \otimes \times^{m}T(\delta_{1j}e_{1}, \cdots , \delta_{mj}e_{1}))\\
=&C_\mathfrak{X}(u_k \otimes(\delta_{1j}Te_{1}, \cdots , \delta_{mj}Te_{1}))\\
=&C_\mathfrak{X}(u \otimes(0, \cdots , 0))\\
=&0,
\end{align*}
which is a contradiction. Therefore, $T$ must be a non-singular operator. This completes the proof.
\end{proof}
\begin{theorem}\label{*}
Suppose $\mathfrak{X}$  is an irreducible unitary representation of $G$ over unitary space $U$ and let $S, T \in End~(V)$. Then $K^\mathfrak{X}(T)^\ast=K^\mathfrak{X}(T^\ast)$ and $K^\mathfrak{X}(T)$
is (a) normal, (b) unitary, (c) Hermitian, (d) skew-Hermitian, (e) p.s.d, or (f) p.d if and only if T has the
corresponding property.
 \end{theorem}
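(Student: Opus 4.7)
The plan is to first establish the adjoint identity $K^{\mathfrak{X}}(T)^{\ast}=K^{\mathfrak{X}}(T^{\ast})$, and then deduce each equivalence (a)--(f) from it, combined with the homomorphism property of $K^{\mathfrak{X}}$ and the injectivity supplied by Theorem~\ref{P_1}(a).

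For the adjoint identity, I would start at the level of the ambient space $U\otimes \times^{m}V$. Because the inner product on $\times^{m}V$ splits as a sum of componentwise inner products, one verifies $(\times^{m}T)^{\ast}=\times^{m}T^{\ast}$ componentwise, and tensoring with the identity on $U$ gives $(I\otimes \times^{m}T)^{\ast}=I\otimes \times^{m}T^{\ast}$. The subspace $V^{\mathfrak{X}}(G)$ is invariant under $I\otimes \times^{m}S$ for every $S\in \text{End}~(V)$ (as already noted just before the statement), hence invariant under both $I\otimes \times^{m}T$ and its adjoint; restricting both operators to $V^{\mathfrak{X}}(G)$ therefore commutes with the adjoint operation and yields $K^{\mathfrak{X}}(T)^{\ast}=K^{\mathfrak{X}}(T^{\ast})$.

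Granting that identity, parts (a)--(d) are algebraic exercises. For (c) Hermitian: $K^{\mathfrak{X}}(T)=K^{\mathfrak{X}}(T)^{\ast}=K^{\mathfrak{X}}(T^{\ast})$ forces $T=T^{\ast}$ by injectivity, and conversely; (d) is parallel with a sign. For (a) normal: $K^{\mathfrak{X}}(T)K^{\mathfrak{X}}(T)^{\ast}=K^{\mathfrak{X}}(TT^{\ast})$ and $K^{\mathfrak{X}}(T)^{\ast}K^{\mathfrak{X}}(T)=K^{\mathfrak{X}}(T^{\ast}T)$ via the homomorphism property, so normality descends to $TT^{\ast}=T^{\ast}T$ by cancelling $K^{\mathfrak{X}}$. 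For (b) unitary: use additionally $K^{\mathfrak{X}}(I_V)=I_{V^{\mathfrak{X}}(G)}$ and the same injectivity argument.

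The only part that is not purely formal is (e)--(f), and this is the main obstacle. The forward direction is immediate: if $T=S^{\ast}S$, then $K^{\mathfrak{X}}(T)=K^{\mathfrak{X}}(S)^{\ast}K^{\mathfrak{X}}(S)$ is p.s.d. The converse cannot be handled by simple cancellation because an arbitrary positive operator on $V^{\mathfrak{X}}(G)$ need not lie in the image of $K^{\mathfrak{X}}$, so one cannot pull back a square root. I would resolve this by a spectral argument. Assume $K^{\mathfrak{X}}(T)$ is p.s.d.; by (c) the operator $T$ is already known to be Hermitian, so choose the orthonormal basis $\{e_1,\ldots,e_n\}$ of $V$ to diagonalize $T$ with $Te_i=\lambda_i e_i$. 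A short computation from the definition gives $K^{\mathfrak{X}}(T)\,C_{\mathfrak{X}}(u_k\otimes e_{ij})=\lambda_i\,C_{\mathfrak{X}}(u_k\otimes e_{ij})$. Provided $V^{\mathfrak{X}}(G)\neq 0$, for every index $i$ one picks $j\in\bar{\mathcal{D}}$ and then some $k$ with $T_j u_k\neq 0$, producing a non-zero eigenvector of $K^{\mathfrak{X}}(T)$ at eigenvalue $\lambda_i$. Positivity of $K^{\mathfrak{X}}(T)$ then forces every $\lambda_i\geq 0$, so $T$ is p.s.d.; the p.d. case is identical with strict inequalities.
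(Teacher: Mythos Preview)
Your proof is correct, and for the adjoint identity together with parts (a)--(d) it coincides with the paper's argument essentially line for line: invariance of $V^{\mathfrak{X}}(G)$ under both $I\otimes \times^{m}T$ and its adjoint to get $K^{\mathfrak{X}}(T)^{\ast}=K^{\mathfrak{X}}(T^{\ast})$, then the homomorphism property combined with the injectivity from Theorem~\ref{P_1}(a).

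The genuine divergence is in the converse of (e)--(f). The paper does not first reduce to the Hermitian case and diagonalize; instead it fixes an arbitrary $v\in V$, forms the vector $v_j^{\ast}=(\delta_{1j}v,\ldots,\delta_{mj}v)$, and computes directly, via Theorem~\ref{inner} and the generalized trace function, that
\[
\bigl\langle K^{\mathfrak{X}}(T)\,C_{\mathfrak{X}}(u_1\otimes v_j^{\ast}),\,C_{\mathfrak{X}}(u_1\otimes v_j^{\ast})\bigr\rangle
=\langle Tv,v\rangle\cdot\frac{|G_j|}{|G|}\,\|T_j u_1\|^{2}.
\]
Choosing $j\in\bar{\mathcal{D}}$ makes the second factor strictly positive, so nonnegativity (resp.\ positivity) of the left side forces $\langle Tv,v\rangle\geq 0$ (resp.\ $>0$) for every $v$. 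Your spectral route---use (c) to get $T$ Hermitian, pick an orthonormal eigenbasis, and observe that each $C_{\mathfrak{X}}(u_k\otimes e_{ij})$ is an eigenvector of $K^{\mathfrak{X}}(T)$ with eigenvalue $\lambda_i$---is equally valid and arguably more conceptual, since it avoids the trace-function computation entirely. The paper's route, on the other hand, is self-contained for (e)--(f) (it does not need to invoke (c) first) and exhibits an explicit link between the quadratic form of $K^{\mathfrak{X}}(T)$ and that of $T$. Both arguments silently use $V^{\mathfrak{X}}(G)\neq 0$, which is also implicit in the appeal to Theorem~\ref{P_1}.
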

 \begin{proof}
 We know that $V^\mathfrak{X}(G)$ is an invariant subspace under of the both $I\otimes \times^{m}T$ and  $(I \otimes \times^{m}T)^\ast=I \otimes \times^{m}T^\ast$. 
Thus
$$
K^\mathfrak{X}(T)^\ast=((I\otimes \times^{m}T)~|_{V^{\mathfrak{X}}(G)})^\ast=(I\otimes \times^{m}T)^\ast~|_{V^{\mathfrak{X}}(G)}=K^\mathfrak{X}(T^*).
$$
If $T$ is (a) normal, (b) unitary, (c) Hermitian, (d) skew-Hermitian, (e) p.s.d, or (f) p.d, then $I\otimes \times^{m}T$ has
the corresponding property and so, $K^\mathfrak{X}(T)=(I\otimes \times^{m}T)~|_{V^{\mathfrak{X}}(G)}$ has also
the corresponding property.\\
Conversely, if $K^\mathfrak{X}(T)$ is normal, then
$$
K^\mathfrak{X}(T^\ast T)=K^\mathfrak{X}(T^\ast )K^\mathfrak{X}(T)=K^\mathfrak{X}(T)^\ast K^\mathfrak{X}(T)=K^\mathfrak{X}(T) K^\mathfrak{X}(T)^\ast=K^\mathfrak{X}(T) K^\mathfrak{X}(T^\ast)=K^\mathfrak{X}(T T^\ast).
$$
By Theorem \ref{P_1}, $TT^\ast=T^\ast T$, i.e., $T$ is normal. Similarly, if $K^\mathfrak{X}(T)$ is unitary
or Hermitian, then so does $T$. If $K^\mathfrak{X}(T)$ is skew-Hermitian, then $K^\mathfrak{X}(T)^\ast=-K^\mathfrak{X}(T)$, so $K^\mathfrak{X}(T^\ast)=K^\mathfrak{X}(-T)$ because $K^\mathfrak{X}$ is an algebraic homomorphism. Then, by Theorem \ref{P_1}, $T^\ast=-T$, i.e; $T$ is skew-Hermitian.\\
Now we consider $K^\mathfrak{X}(T)$ be a p.s.d operator. For every $v\in V$, define $v^\ast_j=(\delta_{1j}v, \cdots , \delta_{mj}v)$. Then  we have
\begin{align*}
\langle K^{\mathfrak{X}}(T)C_\mathfrak{X}(u_{1} \otimes v^\ast_{j}),C_\mathfrak{X}(u_{1} \otimes v^\ast_{j})\rangle \\
=&\langle C_\mathfrak{X}(u_1 \otimes \times^{m}T(\delta_{1j}v, \cdots, \delta_{mj}v)),
 C_\mathfrak{X}(u_1 \otimes (\delta_{1j}v, \cdots , \delta_{mj}v)) \rangle\\
=&\langle C_\mathfrak{X}(u_1 \otimes (\delta_{1j}Tv,\cdots ,\delta_{mj}Tv)), C_\mathfrak{X}(u_1 \otimes (\delta_{1j}v,\cdots ,\delta_{mj}v)) \rangle \\
=& \frac{1}{|G|}\langle Tr_\mathfrak{X}(A)u_1, u_1\rangle\\
=&\frac{1}{|G|}\langle \sum_{\sigma \in G}\mathfrak{X}(\sigma) \sum_{p=1}^n{a_{p \sigma (p)}}u_{1} ,u_{1} \rangle\\
=&\frac{1}{|G|}\langle \sum_{\sigma \in G}\mathfrak{X} (\sigma) \sum_{p=1}^n \delta_{pj} \delta_{\sigma (p)j} \langle Tv ,v\rangle  u_{1} ,u_{1}\rangle\\
=&\langle Tv ,v\rangle \frac{1}{|G|}\langle\sum_{ \sigma \in G} \mathfrak{X} (\sigma)\delta_{\sigma (j)j}u_1, u_1\rangle\\
=&\langle Tv ,v\rangle \langle \frac{1}{|G|}\sum_{ \sigma \in G_j} \mathfrak{X} (\sigma)u_1, u_1\rangle\\
=&\langle Tv ,v\rangle \frac{|G_{j}|}{|G|}\langle T_{j}u_1, u_1\rangle\\
=&\langle Tv ,v\rangle\frac{|G_{j}|}{|G|}\langle T_{j}u_1,T_{j} u_1\rangle\\
=&\langle Tv ,v\rangle\frac{|G_{j}|}{|G|}~||Tu_j||^2\geq 0,
\end{align*}
where $a_{pq}=\langle \delta_{pj}Tv, \delta_{qj}v\rangle=\delta_{pj}\delta_{qj}\langle Tv, v\rangle$.
Consequently, $\langle Tv ,v\rangle \geq 0.$ for all $v\in V$, i.e., $T$ is p.s.d.
If $K^\mathfrak{X}(T)$ is p.d, then $j\in \bar{\mathcal{D}}$. So $C_\mathfrak{X}(u_1 \otimes v^\ast_j)\neq 0$. Hence
\begin{align*}
\langle K^{\mathfrak{X}}(T)C_\mathfrak{X}(u_{1} \otimes v^*_{j}),C_\mathfrak{X}(u_{1} \otimes v^*_{j})\rangle=\langle Tv ,v\rangle\frac{|G_{j}|}{|G|}~||Tu_j||^2> 0.
\end{align*}
Consequently, $\langle Tv ,v\rangle >0$ for all $v\in V$, i.e., $T$ is p.d.
\end{proof}
\begin{corollary}
Suppose $\mathfrak{X}$  is an irreducible unitary representation of $G$ over unitary space $U$ and let $S, T \in End ~(V)$ such that $T\geq S$. Then
$K^\mathfrak{X}(T) \geq K^\mathfrak{X}(S)$.
\end{corollary}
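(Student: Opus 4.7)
The plan is to reduce the corollary to the positive semi-definite case of Theorem \ref{*}, exploiting the linearity of the map $T \mapsto K^{\mathfrak{X}}(T)$. Recall that $T \geq S$ by definition means $T - S$ is p.s.d., and $K^{\mathfrak{X}}(T) \geq K^{\mathfrak{X}}(S)$ means $K^{\mathfrak{X}}(T) - K^{\mathfrak{X}}(S)$ is p.s.d. on $V^{\mathfrak{X}}(G)$. So the entire task is to show that this difference equals $K^{\mathfrak{X}}(T-S)$ and then invoke part (e) of Theorem \ref{*}.

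First I would verify linearity of $T \mapsto K^{\mathfrak{X}}(T)$. Directly from the definition, the Cartesian power satisfies $\times^{m}(T-S)\, v^{\times} = ((T-S)v_{1},\ldots,(T-S)v_{m}) = (\times^{m}T - \times^{m}S)\,v^{\times}$, so $T \mapsto \times^{m}T$ is linear on $\mathrm{End}(V)$. Consequently $T \mapsto I \otimes \times^{m}T$ is linear on $\mathrm{End}(U\otimes \times^{m}V)$, and restricting to the invariant subspace $V^{\mathfrak{X}}(G)$ preserves this linearity, giving
\[
K^{\mathfrak{X}}(T) - K^{\mathfrak{X}}(S) = K^{\mathfrak{X}}(T-S).
\]

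Next, since $T \geq S$, the operator $T - S \in \mathrm{End}(V)$ is p.s.d. By Theorem \ref{*}(e) applied to $T-S$, the induced operator $K^{\mathfrak{X}}(T-S)$ is p.s.d. on $V^{\mathfrak{X}}(G)$. Combining this with the identity from the previous paragraph yields $K^{\mathfrak{X}}(T) - K^{\mathfrak{X}}(S) \geq 0$, i.e., $K^{\mathfrak{X}}(T) \geq K^{\mathfrak{X}}(S)$, which is the desired conclusion.

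There is essentially no main obstacle: once the linearity of $K^{\mathfrak{X}}$ is observed (which is immediate from the pointwise definition of $\times^{m}T$), the corollary is a one-line consequence of Theorem \ref{*}(e). The only point worth flagging is that the paper emphasises the multiplicative (algebraic homomorphism) aspect of $K^{\mathfrak{X}}$; the additive structure used here is not explicitly recorded, so I would make the short verification of linearity explicit before quoting Theorem \ref{*}.
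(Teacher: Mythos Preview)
Your proof is correct and follows essentially the same route as the paper: both arguments use the linearity of $T\mapsto \times^{m}T$ (hence of $K^{\mathfrak{X}}$) together with the fact that positive semi-definiteness is preserved under passage to the induced operator. The only cosmetic difference is that the paper argues the p.s.d.\ preservation directly at the level of $\times^{m}(T-S)$ and then restricts, whereas you invoke Theorem \ref{*}(e); the content is the same.
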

\begin{proof}
Assume that $T\geq S$. Then $T-S\geq 0$. Hence $\times^{m}(T -S )\geq 0$. Therefore $\times^{m}T -\times^{m}S \geq 0$. Consequently 
$\times^{m}T \geq \times^{m}S$.
This implies that
$$
 K^\mathfrak{X}(T)=(I\otimes \times^m T)|_{V^\mathfrak{X} (G)} \geq(I\otimes \times^m S)|_{V^\mathfrak{X}(G)}=K^\mathfrak{X}(S).
 $$
\end{proof}

\begin{theorem}\label{rank}
Suppose $\mathfrak{X}$  is an irreducible unitary representation of $G$ over unitary space $U$ and let $ T \in End~(V)$. Then
$$
\text{rank}~(K^\mathfrak{X}(T))=\text{rank}~(T) ~|\hat{\mathcal{D}}|.
$$
 \end{theorem}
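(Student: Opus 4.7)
The strategy is to exploit the fact that $K^{\mathfrak{X}}$ is an algebraic homomorphism together with Theorem~\ref{P_1}(b) in order to reduce the problem to a single canonical operator of rank $r := \text{rank}(T)$, and then to read off the rank of the induced operator directly from the basis of $V^{\mathfrak{X}}(G)$ constructed in Section~2. If $V^{\mathfrak{X}}(G) = \{0\}$ then $|\hat{\mathcal{D}}| = 0$ and the identity becomes $0 = r\cdot 0$, so we may assume $V^{\mathfrak{X}}(G) \neq \{0\}$.

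Let $D \in \text{End}(V)$ be the rank-$r$ operator defined in the fixed orthonormal basis $\{e_1,\dots,e_n\}$ by $De_i = e_i$ for $1 \leq i \leq r$ and $De_i = 0$ for $r < i \leq n$. Since any two endomorphisms of $V$ of the same rank differ by pre- and post-composition with invertible operators (e.g.\ by elementary row-column reduction, or via the singular value decomposition), there exist invertible $P, Q \in \text{End}(V)$ with $T = PDQ$. Applying the homomorphism yields $K^{\mathfrak{X}}(T) = K^{\mathfrak{X}}(P)\, K^{\mathfrak{X}}(D)\, K^{\mathfrak{X}}(Q)$, and by Theorem~\ref{P_1}(b) both $K^{\mathfrak{X}}(P)$ and $K^{\mathfrak{X}}(Q)$ are invertible on $V^{\mathfrak{X}}(G)$. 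Since multiplication by an invertible operator preserves rank, we obtain $\text{rank}(K^{\mathfrak{X}}(T)) = \text{rank}(K^{\mathfrak{X}}(D))$, so it suffices to compute the latter.

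For this last step I use the basis $\{C_{\mathfrak{X}}(u_1 \otimes e_{ij}) : 1 \leq i \leq n,\ j \in \hat{\mathcal{D}}\}$ of $V^{\mathfrak{X}}(G)$. Because $I \otimes \times^m D$ commutes with $C_{\mathfrak{X}}$, a direct calculation gives
$$
K^{\mathfrak{X}}(D)\, C_{\mathfrak{X}}(u_1 \otimes e_{ij}) = C_{\mathfrak{X}}\bigl(u_1 \otimes \times^m D(e_{ij})\bigr) = \begin{cases} C_{\mathfrak{X}}(u_1 \otimes e_{ij}), & 1 \leq i \leq r, \\ 0, & r < i \leq n, \end{cases}
$$
so $K^{\mathfrak{X}}(D)$ is the projection onto $\text{span}\{C_{\mathfrak{X}}(u_1 \otimes e_{ij}) : 1 \leq i \leq r,\ j \in \hat{\mathcal{D}}\}$, a subspace of dimension $r\,|\hat{\mathcal{D}}|$. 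The only genuinely substantive step in the argument is the factorization $T = PDQ$; once this is in hand, the conclusion is purely formal from the homomorphism property combined with the one-line basis computation displayed above.
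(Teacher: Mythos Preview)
Your argument is correct and takes a genuinely different route from the paper's. The paper computes the matrix of $K^{\mathfrak{X}}(T)$ directly in the basis $\mathbb{S}=\bigcup_{j\in\hat{\mathcal{D}}}\{C_{\mathfrak{X}}(u_1\otimes e_{1j}),\dots,C_{\mathfrak{X}}(u_1\otimes e_{nj})\}$: from $K^{\mathfrak{X}}(T)\,C_{\mathfrak{X}}(u_1\otimes e_{ij})=\sum_{\ell}a_{\ell i}\,C_{\mathfrak{X}}(u_1\otimes e_{\ell j})$ one reads off a block-diagonal matrix with $|\hat{\mathcal{D}}|$ copies of $[T]$, and the rank formula follows. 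Your approach instead factors $T=PDQ$ with $P,Q$ invertible and $D$ the standard rank-$r$ projection, invokes the homomorphism property together with Theorem~\ref{P_1}(b) to reduce to $K^{\mathfrak{X}}(D)$, and then uses the trivial action of $D$ on the basis vectors $e_{ij}$. This is cleaner and more conceptual, and nicely showcases why Theorem~\ref{P_1}(b) is useful. The trade-off is that the paper's direct computation yields the explicit block-diagonal matrix representation, from which the determinant corollary $\det K^{\mathfrak{X}}(T)=(\det T)^{|\hat{\mathcal{D}}|}$ drops out immediately; your reduction gives the rank but would need a separate (though easy) argument for the determinant.
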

 \begin{proof}
 Suppose $\mathbb{F}=\lbrace u_1,\cdots,u_r\rbrace $ and $\mathbb{E}=\lbrace e_1,\cdots,e_n \rbrace $
are orthonormal bases for unitary
spaces $U$ and $V$, respectively. We can assume that the set $\{e_1, \cdots , e_s\}$ is a basis  for Ker~$T$. 
Then the set
$$
\{C_M(u_1\otimes e_{ij}) \mid   1\leq i\leq n, j\in\hat{\mathcal{D} } \}
$$
is a basis of $V^{\mathfrak{X}}(G)$.
For every $1\leq i\leq s$, we have
 \begin{align*}
K^{\mathfrak{X}}(T)C_\mathfrak{X}(u_1\otimes e_{ij})&=(I\otimes \times^{m}T) C_\mathfrak{X}(u_1\otimes e_{ij})\\
=&C_\mathfrak{X}(u_1 \otimes \times^{m}Te_{ij})\\
=&C_\mathfrak{X}(u_1 \otimes (\delta_{1j}Te_{i}, \cdots , \delta_{mj}Te_i)\\
=&C_\mathfrak{X}(u_1 \otimes(0, \cdots, 0))\\
=&0.
 \end{align*}
Let
$$
Te_i=\sum _{j=1}^{n}a_{ji}e_j\hspace{5mm} (1\leq i\leq n). 
$$
Let $s+1\leq i\leq n$ and $j\in \bar{\mathcal{D}}$. For every $1\leq k\leq m$, we define 
$$
x_k=\delta_{kj}Te_i=\delta_{kj}\sum _{\ell =1}^{n}a_{\ell i}e_{\ell}=\sum _{\ell =1}^{n}\delta_{kj}a_{\ell i}e_{\ell}.
$$
Let $B=[b_{k\ell }]=[\delta_{kj}a_{\ell i}]$. Then
 \begin{align*}
Tr_\mathfrak{X}(BB^*)=&\sum_{\sigma \in G}\mathfrak{X}(\sigma)\sum_{k=1}^{n}(BB^*)_{k\sigma(k)} \\
=&\sum_{\sigma \in G}\mathfrak{X}(\sigma) \sum_{k=1}^{n} \sum_{\ell =1}^{n}b_{k \ell}\bar{b}_{\sigma(k)\ell}\\
=&\sum_{\sigma \in G}\mathfrak{X}(\sigma) \sum_{k,\ell=1}^{n}\delta_{kj}a_{\ell i}\delta_{\sigma (k)j}\bar{a}_{\ell i} \\
=&\sum_{\ell=1}^{n}|a_{\ell i}|^2 \sum_{\sigma \in G_j}\mathfrak{X}(\sigma)\\
=&\sum_{\ell=1}^{n}|a_{\ell i}|^2 |G_j|T_{j}
 \neq 0.
 \end{align*}
Using Theorem \ref{Corollary 5.9}, we deduce that 
$
K^{\mathfrak{X}}(T)C_\mathfrak{X}(u_1\otimes e_{ij})=C_\mathfrak{X}(u_1\otimes x^{\times})\neq 0.
$
Also, for every $1\leq i\leq n$ and $j\in \bar{\mathcal{D}}$, we have
\begin{align*}
K^{\mathfrak{X}}(T)C_\mathfrak{X}(u_1\otimes e_{ij})=& C_{\mathfrak{X}}\left( u_1 \otimes \left( \delta_{1j}Te_{i}, \cdots , \delta_{mj}Te_{i}\right)\right) \\
 =&C_\mathfrak{X}\left( u_1 \otimes \left(\sum_{\ell=1}^{n}\delta_{1j}a_{\ell i}e_{\ell}, \cdots , \sum_{\ell=1}^{n}\delta_{mj}a_{\ell i}e_{\ell}\right)\right) \\
=&C_\mathfrak{X}\left(u_1 \otimes \sum_{\ell=1}^{n}a_{\ell i}(\delta_{1j}e_{\ell}, \cdots ,\delta_{mj}e_{\ell})\right) \\
=&C_\mathfrak{X}\left(u_1 \otimes \sum_{\ell=1}^{n}a_{\ell i}e_{\ell j}\right) \\
=&\sum_{\ell=1}^{n}a_{\ell i}C_\mathfrak{X}(u_1 \otimes e_{\ell j}).
\end{align*}
This shows that the representation of $K^\mathfrak{X}(T)$ under the basis
$$
\mathbb{S}=\bigcup_{j\in \hat{\mathcal{D}}}\left\{ C_\mathfrak{X}(u_1\otimes e_{1j}),\cdots,C_\mathfrak{X}(u_1\otimes e_{nj})\right\}
$$
is the following block matrix
\begin{center}
$\begin{bmatrix}
  {\begin{bmatrix} T \end{bmatrix}} 
       & \cdots & 0      \\
  \vdots & \ddots & \vdots \\ 
  0      & \cdots &   {\begin{bmatrix}
T
\end{bmatrix}}
\end{bmatrix}_{|\hat{\mathcal{D} }|\times |\hat{\mathcal{D} }|}$.
\end{center} 
Therefore 
$$
rank~ K^\mathfrak{X}(T)=rank~ (T) ~ |\hat{\mathcal{D}}|.
$$
\end{proof}
Using the above block matrix representation of $K^{\mathfrak{X}}(T)$, we obtain the following corollary.
\begin{corollary}
Let $\mathfrak{X}$ be an irreducible unitary representation of $G$ over unitary space $U$ and let $ T \in End~(V)$. Then
$$
det~K^{\mathfrak{X}}(T)=(det~T)^{|\hat{\mathcal{D}}|}.
$$
\end{corollary}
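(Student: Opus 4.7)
The plan is to invoke directly the block-diagonal matrix representation of $K^\mathfrak{X}(T)$ already established in the proof of Theorem \ref{rank}. Nothing new needs to be computed; the corollary is a one-line consequence of that block form combined with the standard determinant formula for block diagonal matrices.

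More precisely, I would proceed as follows. Recall from the proof of Theorem \ref{rank} that, under the ordered basis
$$
\mathbb{S}=\bigcup_{j\in \hat{\mathcal{D}}}\left\{ C_\mathfrak{X}(u_1\otimes e_{1j}),\cdots,C_\mathfrak{X}(u_1\otimes e_{nj})\right\}
$$
of $V^{\mathfrak{X}}(G)$, the matrix of $K^{\mathfrak{X}}(T)$ is block diagonal with exactly $|\hat{\mathcal{D}}|$ diagonal blocks, each of which equals the matrix $[T]_{\mathbb{E}}$ of $T$ in the orthonormal basis $\mathbb{E}=\{e_1,\dots,e_n\}$ of $V$. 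Since the determinant of a block diagonal matrix is the product of the determinants of its diagonal blocks, we immediately conclude
$$
\det K^{\mathfrak{X}}(T)=\bigl(\det [T]_{\mathbb{E}}\bigr)^{|\hat{\mathcal{D}}|}=(\det T)^{|\hat{\mathcal{D}}|}.
$$

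There is no real obstacle: all the work lies in the preceding theorem, which already produced the block-diagonal shape. The only thing to emphasize in writing the proof is to cite Theorem \ref{rank} (or more precisely, the explicit matrix displayed at the end of its proof), and then apply the elementary fact about determinants of block diagonal matrices.
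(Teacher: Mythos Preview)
Your proposal is correct and matches the paper's own approach exactly: the paper simply states that the corollary follows from the block-diagonal matrix representation displayed at the end of the proof of Theorem~\ref{rank}, which is precisely what you invoke. Nothing further is needed.
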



\section{\bf Open problems}
\begin{problem}
Characterize the subgroups of $S_m$ whose irreducible representations are all o.b.-representations.
\end{problem}
\begin{problem}
Let $G$ be a subgroup of $S_m$ and $\mathfrak{X}$ be an irreducible unitary representation of $G$. Determine the conditions on $\mathfrak{X}$ such that $V^{\mathfrak{X}}(G)$ has an orthogonal basis consisting the generalized Cartesian standard symmetrized vectors. 
\end{problem}
 \begin{problem}
Determine the conditions on $G$ and $\mathfrak{X}$ such that 
$V^{\mathfrak{X}}(G)\neq 0$.
\end{problem}

\end{document}